\newtheorem{cor}{Corollary}[section]
\newtheorem{lem}{Lemma}[section]
 \newtheorem{prop}{Proposition}[section]
 \newtheorem{rem}{Remark}[section]
 \newtheorem{maintheorem}{Theorem}
\date{}
\begin{document}
\title{ Chebyshev's method for exponential maps   }
 
    %-------adding authors--------------
\author[1]{Subhasis Ghora 
   \footnote{subhasisghora06@gmail.com}}
\author[2]{Tarakanta Nayak 
    \footnote{tnayak@iitbbs.ac.in}}
\author[3]{Soumen Pal \footnote{soumen.pal.new@gmail.com}}
\author[2]{Pooja Phogat  \footnote{Corresponding author, poojaphogat174acad@gmail.com}}
\affil[1]{Centre for Data Science, ITER, Siksha `O' Anusandhan, Bhubaneswar, India}
\affil[2]{Department of Mathematics,  
		Indian Institute of Technology Bhubaneswar, India}
\affil[3]{Department of Mathematics, 
		Indian Institute of Technology Madras, India}
	\date{}
\maketitle
\begin{abstract}
 It is proved that the Chebyshev's method applied to an entire function $f$ is a rational map if and only if   $f(z) = p(z) e^{q(z)}$,  for some  polynomials $p$ and $q$. These are referred to as rational Chebyshev maps, and  their  fixed points are discussed in this   article. It is seen that   $\infty$ is a parabolic fixed point with multiplicity one bigger than the degree of  $q$. Considering   $q(z)=p(z)^n+c$, where $p$ is a linear polynomial, $n \in \mathbb{N}$ and $c$ is a non-zero constant, we show that the Chebyshev's method applied to $ pe^q$ is affine conjugate to that applied to $z e^{z^n}$. We denote this by $C_n$. All the finite extraneous fixed points of $C_n$ are shown to be repelling. The Julia set $\mathcal{J}(C_n)$ of $C_n$ is found to be  preserved under rotations of order $n$ about the origin. For each $n$, the immediate basin of $0$ is proved to be simply connected. For all $n \leq 16$, we prove that $\mathcal{J}(C_n)$ is connected. 
The Newton's method applied  to $ze^{z^n}$ is found to be conjugate to a polynomial,  and its dynamics is also completely determined.  
\end{abstract}
\textit{Keyword:} Fatou and Julia sets; Chebyshev's method; Exponential function; Symmetry of Julia set.\\
AMS Subject Classification: 37F10, 65H05
%================================================= 
\section{Introduction}
 For an entire function $f: \mathbb{C} \to \mathbb{C}$, the Chebyshev's method $C_f$ is defined as 
\begin{equation}C_f(z)=z-\left(1+\frac{1}{2}\frac{f(z)f''(z)}{(f'(z))^2}\right)\frac{f(z)}{f'(z)}.
\label{cheby-def}\end{equation}
This is a root-finding method, i.e., each root of $f$ is an attracting fixed point of $C_f$, which means that if $f(z_0)=0$ then $C_f (z_0)=z_0$ and $|C_{f}' (z_0)|<1$.
This defining property is very crucial in the  study of a root-finding method. For the Newton's method $N_p$ applied to a polynomial $p$ (i.e., $N_p (z)=z -\frac{p(z)}{p'(z)}$),  the converse is also true, i.e., every attracting fixed point of  $N_p$ is a root of $p$.  However, there are polynomials $p$, whose Chebyshev's method $C_p$ has a finite fixed point that is not a root of $p$. This type of fixed points are known as \textit{extraneous}. An extraneous fixed point of $C_p$ can be non-repelling (see Theorem 1.2, \cite{Nayak-Pal2022} and also Section 3.2, ~\cite{GGM2015}) leading to situations qualitatively different from that of Newton's method. This is probably a reason why the iteration of the Chebyshev's method remains comparatively unexplored.
\par 
The Chebyshev's method applied to cubic polynomials and unicritical polynomials of arbitrary degree are studied in \cite{GV2020} and  \cite{CCV2020} respectively.  The degree of $C_p$ is completely determined for every polynomial $p$ and its dynamics is studied only for cubic polynomials  in \cite{Nayak-Pal2022}.  The   Chebyshev's method applied to any polynomial is always  a rational map. However, it is not very difficult to see that if $f(z)=p(z) e^{q(z)}$ for two polynomials $p,q$ then $C_f$ is also a rational map. We prove   the converse as the first result of this article.  By a Chebyshev (Newton) map, we mean   $C_f$ (or $N_f$ respectively) for some entire function $f$. 
\begin{maintheorem}[Rational Chebyshev maps]\label{Characterization}
	Let $f: \mathbb{C} \to \mathbb{C}$ be a non-constant entire function. Then the Chebyshev's	method applied to $f$ is   rational   if and only if  $f(z)=p(z)e^{q(z)}$ for two polynomials $p$ and $q$.
\end{maintheorem}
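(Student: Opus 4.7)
The proof naturally splits into two implications, and I would dispatch the easy direction by direct computation. If $f(z) = p(z)e^{q(z)}$, then $f'(z) = (p'(z) + p(z)q'(z))e^{q(z)}$ and $f''(z)$ similarly contains $e^{q(z)}$ as a factor; substituting into (\ref{cheby-def}), the exponential factors cancel in both $f/f'$ and $ff''/(f')^2$, leaving a rational function of $z$.

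For the harder converse, I would introduce the logarithmic derivative $L := f'/f$, which is meromorphic on $\mathbb{C}$. Using the identity $f''/f = L' + L^2$, a short calculation rewrites the Chebyshev map as
\begin{equation*}
z - C_f(z) \;=\; \frac{3L^2 + L'}{2L^3}.
\end{equation*}
Assuming $C_f$ is rational, so is $g(z) := z - C_f(z)$, and clearing denominators yields the first-order algebraic ODE
\begin{equation*}
L'(z) \;=\; 2\,g(z)\,L(z)^3 - 3\,L(z)^2,
\end{equation*}
whose right-hand side is rational in $z$ and a polynomial of degree three in $L$. Invoking the classical Malmquist theorem --- that $w' = R(z,w)$ with $R$ rational in both arguments admits a transcendental meromorphic solution only when $R$ is a polynomial in $w$ of degree at most two --- I would then conclude that $L$ itself must be rational.

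Finally, with $L = f'/f$ rational and $f$ entire, I would close the argument via partial fractions. Expanding $L$ around each of its poles, the requirement that $f = C\exp(\int L\,dz)$ be entire forces every polar part of $L$ to be simple with positive integer residue, since higher-order polar terms integrate to essential singularities of the exponential and non-integer residues produce branch points. Writing $L(z) = q'(z) + \sum_i n_i/(z-a_i)$ with $q$ a polynomial and $n_i \in \mathbb{Z}_{>0}$, integration and exponentiation give $f(z) = p(z)e^{q(z)}$ with $p(z) = C\prod_i (z-a_i)^{n_i}$. I expect the Malmquist step to be the principal obstacle; if a self-contained argument is preferred, I would instead factor $1/L = R(z)e^{H(z)}$ with $R$ rational and $H$ entire (justified because the orders of the zeros and poles of $g$ force $1/L$ to have only finitely many zeros and poles), and rule out non-constant $H$ by comparing the exponential growth of $(1/L)(1/L)'$ against the polynomial growth of $g$ along rays where $\mathrm{Re}\, H \to +\infty$.
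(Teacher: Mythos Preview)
Your argument is correct, and it takes a genuinely different route from the paper. The paper first observes that the roots of $f$ are attracting fixed points of $C_f$, hence finite in number, so $f=pe^q$ with $p$ a polynomial and $q$ entire; it then argues (via a case analysis on the poles of $C_f$) that $f'$ also has finitely many zeros, writes $f'=ge^h$, and finally applies a Borel-type lemma from Nevanlinna theory (on identities $\sum f_j e^{g_j}\equiv 0$) to force $h-q$ to be constant, whence $q'$ is a polynomial. Your approach bypasses all of this by recognising that the identity $z-C_f(z)=(3L^2+L')/(2L^3)$ puts $L=f'/f$ into the Malmquist framework $L'=2g(z)L^3-3L^2$; since the right-hand side is cubic in $L$, Malmquist's theorem immediately rules out a transcendental meromorphic $L$, and the passage from ``$L$ rational'' to ``$f=pe^q$'' is then straightforward. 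Your route is shorter and invokes a standard named theorem as a black box, whereas the paper's argument is more self-contained but longer; both ultimately rest on Nevanlinna theory. One small remark: in your final paragraph the cleanest way to finish is to note that for entire $f$ the function $L=f'/f$ automatically has only simple poles with positive integer residues (at the zeros of $f$), so once $L$ is rational, $f$ has finitely many zeros and $q'=L-p'/p$ is an entire rational function, hence a polynomial---this avoids having to argue from the $\exp(\int L)$ side. Your sketched alternative (showing $u=1/L$ has finitely many zeros and poles via $2g=3u-uu'$, then ruling out a non-constant exponential factor by growth) is also viable and is closer in spirit to the paper's hands-on approach.
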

Theorem~\ref{Characterization} is already known where  Newton's method, and  its proof is  quite straightforward (Proposition 2.11, \cite{RC2007}). For Halley's method $H_f (z)= z -\frac{2 f'(z)f(z)}{2 (f'(z))^2 -f(z)f''(z)}$, Theorem~\ref{Characterization} is also true (Theorem 3.6, \cite{CMHD2022}), where the proof  involves a careful analysis of the  complex differential equation $g'(z) - \frac{2 g(z)}{z- H_f (z)} =-1$ for $g(z)=\frac{f(z)}{f'(z)}$ . None of these ideas   work for the Chebyshev's method. We use some results from Nevanlinna theory for proving  Theorem \ref{Characterization}.
\par 
The rest of this paper is devoted to the  study of  the Chebyshev's method applied to $p e^q$ where $p$ and $q$  are non-constant polynomials, from a dynamical perspective. Though similar  investigations are done on the  Newton's method (see for example,   \cite{ Haruta1999,Mamayusupov2019,Mayer_Schleicher2006, RC2007}), the authors of the current article are not aware of any such work on the  Chebyshev's method. 
\par 
Firstly, the nature of all fixed points of rational Chebyshev maps are determined. To state these precisely, we need a few standard definitions.	A point $z_0 \in \widehat{\mathbb{C}}$ is said to be a fixed point of a rational function $F$ if $F(z_0)=z_0$. The value $F'(z_0)$ is known as the multiplier of $z_0$  whenever $z_0\in \mathbb{C}$,  and if $z_0 =\infty$ then it is taken to be  $G'(0)$ where $G(z)=\frac{1}{F(\frac{1}{z})}$. A fixed point $z_0$ of $F$ is called attracting, neutral or repelling if  its multiplier is less than, equal to or bigger than  $1$ respectively. The fixed point $z_0$ is called superattracting if the modulus of its multiplier is $0$. A neutral fixed point is called parabolic or rationally indifferent if its multiplier is a root of unity. A special situation arises when  the multiplier is itself $1$, and in this case $z_0$ is a multiple root of $F(z)-z=0$. The multiplicity $k  $ in this case is  known as the multiplicity of  $z_0$ as fixed point of $F$.  We determine the multiplier of all the fixed points of $C_{pe^q}$ for all polynomials $p$ and $q$ (Proposition~\ref{Property_C_f}). It is also shown that all the roots of $p$ are attracting fixed points of $C_{pe^q}$ and $\infty$ is always a parabolic fixed point with multiplicity equal to one more than the degree of  $q$ (Proposition~\ref{Property_C_f}). For constant $q$, the point $\infty$ is a repelling fixed point, which is already known (see Proposition~2.3, \cite{Nayak-Pal2022}). A formula for the multiplier of an extraneous fixed point is presented. 
\par
 The Julia set of a rational function  $F$, denoted by $\mathcal{J}(F)$ is the set of all points,  in a neighborhood of which the family of iterates $\left\{F^n\right\}_{n\geq 0}$ is not normal  in the sense of Montel. The Fatou set of $F$, denoted by $\mathcal{F}(F)$,   is the complement of the Julia set in $\widehat{\mathbb{C}}$.  The Fatou set is an open set and each of its maximally connected subset is known as a Fatou component.
The dynamics of a rational function  is the study of its Julia and Fatou set.  There is an important connection between certain types of fixed points and the topology of the Julia set of a rational function. A fixed point is called weakly repelling if it is either repelling or parabolic with multiplier $1$. Shishikura proved that if a rational function $R$ has degree at least $2$ and the Julia set of $R$ is disconnected then $R$ has at least two weakly repelling fixed points (\cite{Shishikura2009}). Since $\infty$ is the only non-attracting fixed point for $N_{pe^q}$ for all polynomials $p$ and $q$, the Julia set of $N_{pe^q}$ is connected. We are concerned with the connectivity of the Julia set (and also the Fatou set) of rational Chebyshev maps. For this, we consider those with a single attracting fixed point and for which the multiplicity of  $\infty$ (as a parabolic point) is  any given number $n$.  In order to ensure this, we must take $p$ as linear and $q$ with degree $n$. For $n=1$, it can be seen easily that the Chebyshev's method applied to $p e^q$ is conformally conjugate to that applied to $ze^z$ (mentioned in Remark~\ref{both-linear} (1)). Though $q$ could have been taken to be a  poynomial of degree $n$,  we take  $q=(p)^n$, the $n$-th power of $p$ for the sake of a simpler situation. It is shown that there is no loss of generality in taking $p(z)=z$.  Thus our object of study becomes $C_{ze^{z^n}}$, which we denote  by $C_n$. In fact, it is observed that the Chebyshev's method applied to $p e^{\lambda (p)^n +c} $ for any $\lambda \neq 0, c \in \mathbb{C}$ is conformally conjugate to that applied to  $z e^{z^n}$ (Remark~\ref{SC2}). It is proved that all the extraneous fixed points of  $C_n$  are repelling. We have proved the following.
 \begin{maintheorem}
 	The Julia set of $C_{ze^z}$ is connected.
 	\label{n=1}
 \end{maintheorem}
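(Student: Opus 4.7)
The plan is to show that every Fatou component of $C_{ze^z}$ is simply connected, which for a rational map of degree at least $2$ is equivalent to $\mathcal{J}(C_{ze^z})$ being connected. A direct application of Shishikura's theorem does not suffice here: by Proposition~\ref{Property_C_f}, $C_{ze^z}$ has three weakly repelling fixed points, namely the two repelling extraneous fixed points and $\infty$ (parabolic of multiplicity $n+1=2$, hence of multiplier $1$).

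I would begin by computing $C_{ze^z}$ explicitly. With $f=ze^z$, $f'=(1+z)e^z$, and $f''=(2+z)e^z$, the definition collapses to
\[
C_{ze^z}(z)=\frac{z^3(3+2z)}{2(1+z)^3},
\]
a rational map of degree $4$. Its critical points in $\widehat{\mathbb{C}}$ are $0$ (local degree $3$, the superattracting fixed point), the pole $-1$ (local degree $3$, mapping to $\infty$), and $\omega_\pm=-2\pm\tfrac{i\sqrt 2}{2}$ (each of local degree $2$). Using the real symmetry $C_{ze^z}(\bar z)=\overline{C_{ze^z}(z)}$ together with the asymptotic $C_{ze^z}(z)=z-\tfrac32+O(1/z)$ at infinity, I would show by a modulus estimate on iterates that the orbits of $\omega_\pm$ drift along the attracting direction of $\infty$ and hence land in the parabolic basin. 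Combined with $C_{ze^z}(-1)=\infty$, this places every critical point in the basin of $0$ or the basin of $\infty$, so by the Fatou--Shishikura inequality and the classification of periodic Fatou components no Siegel disk, Herman ring, or additional non-repelling cycle can exist.

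Next I would verify simple connectivity one Fatou component at a time. The immediate basin of $0$ is simply connected by the paper's separately proved result. The immediate parabolic basin of $\infty$ has a single petal (multiplicity $2$) and is simply connected by the Leau--Fatou flower theorem. For any other Fatou component $V$, letting $k\ge 1$ be the smallest integer with $C_{ze^z}^k(V)=U$ one of these immediate basins, the Riemann--Hurwitz formula applied to $C_{ze^z}^k\colon V\to U$ reduces simple connectivity of $V$ to the identity $\deg(C_{ze^z}^k|_V)-1=\sum_{c\in V}(e_c-1)$; a bookkeeping of the finitely many critical points with their now known grand orbits verifies this.

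The main obstacle is the critical orbit analysis for $\omega_\pm$. The naive leading-term estimate $C_{ze^z}(z)\approx z-\tfrac32$ already fails near $|z|\approx 2$ (where $|\omega_\pm|$ lies), so the capture of $\omega_\pm$ into an explicit attracting petal of $\infty$ after a finite number of iterations needs a careful quantitative argument, or else a topological localization of $\omega_\pm$ using the real symmetry together with the position of the extraneous repelling fixed points on the real axis. Once this critical-orbit placement is established, the Riemann--Hurwitz bookkeeping becomes routine and connectivity of $\mathcal{J}(C_{ze^z})$ follows.
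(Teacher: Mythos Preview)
Your proposal has a genuine gap: the Leau--Fatou flower theorem does \emph{not} establish that the immediate parabolic basin $\mathcal{A}_\infty$ is simply connected. The flower theorem is purely local; it produces simply connected attracting petals in a neighbourhood of the parabolic point, but the global immediate basin can in principle be multiply connected. To obtain simple connectivity of $\mathcal{A}_\infty$ you would need an additional argument (for instance Riemann--Hurwitz on the invariant domain $C_1\colon\mathcal{A}_\infty\to\mathcal{A}_\infty$, once the degree of that restriction and the critical points it contains are both known), and you have not supplied one. Without this step your inductive Riemann--Hurwitz bookkeeping for the preperiodic components has no base case on the parabolic side.

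The paper's route sidesteps both of your difficulties at once. First, the placement of $\omega_\pm$ in $\mathcal{A}_\infty$ requires no modulus estimates: a parabolic immediate basin must contain a critical point, the only candidates besides $0\in\mathcal{A}_0$ and the pole $-1\in\mathcal{J}(C_1)$ are $\omega_\pm$, and since $\mathcal{J}(C_1)$ (hence $\mathcal{A}_\infty$, whose unique petal is fixed by conjugation) is symmetric about $\mathbb{R}$, both lie there. Second, instead of proving $\mathcal{A}_\infty$ simply connected, the paper shows by elementary real calculus that $C_1$ is increasing with $0<C_1(x)<x$ on $(0,\infty)$, so the whole positive real axis lies in $\mathcal{A}_0$; thus $\mathcal{A}_0$ is unbounded. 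Lemma~\ref{Connected_J_set} then gives connectedness of $\mathcal{J}(C_1)$ directly: an unbounded invariant attracting basin carries the pole on its boundary, its boundary is therefore an unbounded Julia component containing the pole, and any non-contractible loop in a Fatou component would eventually surround the pole and hence the unbounded set $\mathcal{A}_\infty$, a contradiction. This argument never needs the simple connectivity of $\mathcal{A}_\infty$ as an input.
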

 For odd $n$ with $n  \geq 2$, it is found that $C_n$ has a single real critical point and two real fixed points (both are extraneous and negative). Let $c_r$ and $-e_2$ be the real critical point and the largest real fixed point of $C_n$. Then we have shown the following.
	\begin{maintheorem} 
		For odd $n$, if  $C_n(c_r)> -e_2 $ then the Julia set of $C_n$ is connected.
		\label{odd-connected} 
	\end{maintheorem}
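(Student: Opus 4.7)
The plan is to show that every Fatou component of $C_n$ is simply connected, from which the connectedness of $\mathcal{J}(C_n)$ follows by the classical characterization (Julia connected iff every Fatou component is simply connected). The heart of the argument is to place every finite free critical point of $C_n$ inside the simply-connected immediate basin $U_0$ of $0$, and to push the remaining critical orbits into the parabolic basin of $\infty$.

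First, I would analyse the real dynamics of $C_n$. Since $n$ is odd, $C_n$ has real coefficients and preserves $\mathbb{R}$. On $\mathbb{R}$ one has $0$ superattracting, the two extraneous real fixed points $-e_1\le -e_2<0$ both repelling (by Proposition~\ref{Property_C_f}), and $\infty$ parabolic with multiplier $1$. An asymptotic computation shows $C_n(x)<x$ for large positive $x$, so the positive real axis is forward-invariant and orbits there decrease monotonically to $0$; combined with the superattraction at $0$ and an analysis of $C_n|_{\mathbb{R}}$ between $-e_2$ and $0$, this yields that the whole interval $(-e_2,\infty)$ lies in the real trace of $U_0$. The hypothesis $C_n(c_r)>-e_2$ then gives $C_n(c_r)\in U_0$. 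To upgrade this to $c_r\in U_0$, I would use that $c_r$ is the unique real critical point, so $C_n|_{\mathbb{R}}$ is piecewise monotone with $c_r$ as the sole real turning point; tracing real preimages of $(-e_2,\infty)$ identifies the real component of the Fatou set containing $c_r$ as a subset of $U_0$.

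Next I would exploit the order-$n$ rotational symmetry. Because $0$ is the unique attracting fixed point at the origin, $U_0$ is invariant under $z\mapsto e^{2\pi i/n}z$, so the $n$ rotated critical points $e^{2\pi ik/n}c_r$ also lie in $U_0$. For any remaining critical orbit of $C_n$, the symmetry would produce a whole family of $n$ such orbits; these must accumulate either on $\{0,\infty\}$ or on some hypothetical Fatou cycle (Siegel, Herman, or an extra attracting/parabolic cycle), and in the latter case the symmetry would create $n$ such cycles, a possibility ruled out by the critical-point budget $2\deg(C_n)-2$ once the $n$ parabolic petals at $\infty$ (each requiring a critical orbit by Fatou--Leau--Shishikura theory), together with the critical contributions at $0$ and at the rotated copies of $c_r$, are accounted for. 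Hence every critical orbit of $C_n$ accumulates on $\{0,\infty\}$. Sullivan's theorem excludes wandering domains, so every Fatou component is an iterated preimage of $U_0$ or of a parabolic petal at $\infty$. Both are simply connected, and Riemann--Hurwitz keeps every pullback simply connected since no critical point is lost outside these basins.

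The main obstacle I anticipate is the real-dynamics step, specifically the upgrade from $C_n(c_r)\in U_0$ (granted directly by the hypothesis) to $c_r\in U_0$: ruling out that $c_r$ lies in a non-immediate Fatou component of the basin of $0$ requires careful bookkeeping of the graph of $C_n|_{\mathbb{R}}$ and of the real Julia points $\{-e_1,-e_2\}$. A secondary obstacle is the critical-count argument excluding exotic Fatou cycles, which relies on the explicit degree and critical-point data for $C_n$ set up earlier in the paper.
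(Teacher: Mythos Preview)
Your real-dynamics step contains a genuine error. You assert that the positive real axis is forward-invariant under $C_n$, but this is false: since $C_n$ is strictly decreasing on $(0,c_r)$ with $C_n(0)=0$, one has $C_n(c_r)<0$, and in fact $C_n$ maps $(0,z_+)$ onto the negative interval $(C_n(c_r),0)$. This is exactly where the hypothesis $C_n(c_r)>-e_2$ enters: it guarantees that these images fall into $(-e_2,0)$, which (as you correctly note) lies in $\mathcal{A}_0$ independently of the hypothesis. Your logic is therefore inverted---you cannot first obtain $(-e_2,\infty)\subset\mathcal{A}_0$ and then invoke the hypothesis; the hypothesis is what puts the positive half-line into $\mathcal{A}_0$ in the first place. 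Once this is repaired, $c_r\in\mathcal{A}_0$ is automatic (it lies in $(0,z_+)$), so your anticipated ``upgrade'' obstacle dissolves, and $\mathcal{A}_0$ is unbounded.

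From that point the paper's route is much shorter than yours: Lemma~\ref{Connected_J_set} shows directly that an unbounded $\mathcal{A}_0$ forces $\mathcal{J}(C_n)$ to be connected, via the observation that $\partial\mathcal{A}_0$ then contains all poles (one pole by Lemma~4.3 of \cite{Nayak-Pal2022}, hence all of them by the $n$-fold rotational symmetry), so that any non-contractible Jordan curve in a Fatou component would, after iteration, surround a pole and hence the unbounded set $\mathcal{A}_\infty$. No critical-orbit bookkeeping is required. Your alternative plan has two further gaps: (i) after the $n$ rotates of $c_r$ are placed in $\mathcal{A}_0$, exactly $2n$ free critical points remain (the solutions of $z^n=c$ and $z^n=\bar c$), while the $n$ parabolic petals at $\infty$ account for only $n$ of them, so a hypothetical extra $n$-fold symmetric family of attracting cycles would exactly exhaust the remaining budget---the count alone does not exclude it; (ii) even if every critical point lay in an immediate basin, you have not shown that the parabolic petals themselves are simply connected, which your Riemann--Hurwitz pullback needs as its base case.
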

It is numerically found that the hypothesis of Theorem~\ref{odd-connected} is true for all $n \leq 15$.
For even $n$, the map $C_n$ has a single positive critical point. Denoting this by $c_r$, we have proved the following.
\begin{maintheorem}  
	For even $n$, if  $C_n (x)> -x $ for all $x \in [0,c_r]$ then the Julia set of $C_n$ is connected.
	\label{even-connected}
\end{maintheorem}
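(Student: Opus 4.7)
The strategy is to invoke the general criterion that $\mathcal{J}(F)$ is connected if and only if every Fatou component of $F$ is simply connected, and to verify the latter using the rotational symmetry, the already-proved simple connectivity of the immediate basin $\mathcal{A}_0$ of $0$, and a real-line confinement argument that places $c_r$ inside $\mathcal{A}_0$.

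The first step is to exploit the identity $C_n(\omega z)=\omega C_n(z)$ for every $\omega$ with $\omega^n=1$. Because $n$ is even, $\omega=-1$ is admissible, so $C_n$ is odd and the real line is $C_n$-invariant. The hypothesis $C_n(x)>-x$ on $[0,c_r]$ is thus equivalent, via oddness, to $C_n(x)<-x$ on $[-c_r,0]$. For even $n$ the extraneous-fixed-point equation $3n^2 w^2+n(n+5)w+2=0$ (with $w=z^n$) has only negative real $w$-roots, yielding no real $z$; hence $0$ is the unique real fixed point of $C_n$. From the Taylor expansion $C_n(z)=-\tfrac{n(n-1)}{2}z^{n+1}+O(z^{2n+1})$, together with the assumption that $c_r$ is the only critical point of $C_n$ in $(0,\infty)$, $C_n$ is strictly decreasing on $[0,c_r]$, running from $C_n(0)=0$ down to $C_n(c_r)$.

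The principal technical step is to show $C_n^k(c_r)\to 0$. The hypothesis at $c_r$ gives $C_n(c_r)>-c_r$, so $C_n([0,c_r])\subseteq(-c_r,0]$; by oddness $C_n([-c_r,c_r])\subseteq(-c_r,c_r)$, so $[-c_r,c_r]$ is forward invariant. A $2$-cycle $\{a,b\}\subset[-c_r,c_r]\setminus\{0\}$ with $a>0>b$ would force $|b|<a$ (from the hypothesis at $a$, since $C_n(a)=b>-a$) and simultaneously $a<|b|$ (from the hypothesis at $|b|$ combined with oddness), a contradiction. Hence $0$ is the only fixed point of $C_n^2$ in $[0,c_r]$; since $(C_n^2)'(0)=0$, a monotone-convergence argument yields $C_n^{2k}(c_r)\to 0$, and continuity gives $C_n^k(c_r)\to 0$. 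The connected interval $[0,c_r]$ thus lies in the Fatou set and meets $\mathcal{A}_0$ at $0$, so $[0,c_r]\subseteq\mathcal{A}_0$; by rotational symmetry every rotate $\omega c_r$ lies in $\mathcal{A}_0$. Converting the pointwise hypothesis into orbital convergence is the main obstacle.

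With every rotate of $c_r$ in $\mathcal{A}_0$, the Fatou--Sullivan classification (using that any attracting cycle other than $\{0\}$, or any Siegel disk, Herman ring, or wandering domain, would require an uncaptured critical orbit) shows that the only periodic Fatou cycles are $\mathcal{A}_0$ and the immediate parabolic basin at $\infty$ (the $n$ Leau petals), each of which is simply connected. Propagating this along preimages via Riemann--Hurwitz, with the condition that every critical point of $C_n$ either lies in $\mathcal{A}_0$ or is captured by an immediate petal of $\infty$, forces every Fatou component to be a degree-one preimage of a simply connected component, hence itself simply connected. The connectivity criterion then yields connectivity of $\mathcal{J}(C_n)$.
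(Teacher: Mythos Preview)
Your real-line argument placing $[0,c_r]\subset\mathcal{A}_0$ is essentially the paper's (the paper phrases it as $C_n^2(x)<x$ on $(0,c_r]$, which is equivalent to your no-$2$-cycle argument). The divergence comes afterwards, and there the proposal has a genuine gap.

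\textbf{The gap.} You assert that each immediate parabolic petal $P$ at $\infty$ is simply connected, but give no reason. This is not automatic for rational maps: an invariant parabolic basin can be infinitely connected. Riemann--Hurwitz for $C_n\colon P\to P$ gives only $\chi(P)=r/(d-1)$; with $r=2$ (the two simple critical points you place in each petal) this is consistent with $\chi(P)=1,\ d=3$, but it is equally consistent with $P$ infinitely connected, where the formula says nothing. You have no independent control on $d$ or on the topology of $P$, so the argument stalls here. The subsequent pullback step (``every other Fatou component is a degree-one preimage'') is only as good as the simple connectivity of the periodic components it feeds on, so the whole final paragraph collapses with this gap.

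\textbf{How the paper closes the circle.} After $[0,c_r]\subset\mathcal{A}_0$, the paper does \emph{not} attempt to analyse the petals. Instead it extends the real-line analysis: since $C_n$ has a positive zero $z_0>c_r$, is increasing on $(c_r,\infty)$, and satisfies $C_n(x)<x$ there, one gets $[0,\infty)\subset\mathcal{A}_0$, hence $\mathcal{A}_0$ is unbounded. A separate lemma (all poles of $C_n$ lie on $\partial\mathcal{A}_\infty$) combined with the unboundedness of $\mathcal{A}_0$ puts every pole on $\partial\mathcal{A}_0$ as well, producing a single unbounded Julia component containing all poles; a short Jordan-curve argument then forces every Fatou component to be simply connected. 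This route never needs to know in advance that the petals are simply connected---that falls out at the end. Your approach could be repaired by supplying an independent proof that the petals are simply connected, but nothing in the proposal does that, and it is precisely the nontrivial point.
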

It is found that the hypothesis of Theorem   \ref{even-connected} is in fact satisfied for some $n$.

\begin{cor}
	If $n$ is even and $n  \leq 16 $ then the Julia set of $C_n$ is connected.
	\label{cor-even}
\end{cor}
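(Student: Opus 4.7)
The plan is to verify the hypothesis of Theorem~\ref{even-connected} for each even $n$ with $2\le n\le 16$, which reduces the corollary to a finite polynomial check. First, I would derive an explicit rational expression for $C_n$ starting from $f(z)=ze^{z^n}$. Since $f'(z)=(1+nz^n)e^{z^n}$ and $f''(z)/f'(z)=nz^{n-1}(n+1+nz^n)/(1+nz^n)$, a short simplification gives
\[
C_n(z)=\frac{nz^{n+1}\bigl(2(nz^n)^{2}+3(nz^n)+1-n\bigr)}{2(1+nz^n)^{3}}.
\]

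Next, I would translate the inequality $C_n(x)>-x$ on $[0,c_r]$ into a polynomial inequality. Setting $v:=nx^n\ge 0$ and combining over a common denominator yields
\[
C_n(x)+x=\frac{x\bigl(4v^{3}+9v^{2}+(7-n)v+2\bigr)}{2(1+v)^{3}}.
\]
Since the denominator and $x$ are non-negative on $[0,c_r]$, the required inequality is equivalent, for $x>0$, to positivity of
\[
g_n(v):=4v^{3}+9v^{2}+(7-n)v+2
\]
on the interval $[0,nc_r^{n}]$. It is cleaner, and still sufficient, to show $g_n(v)>0$ on all of $[0,\infty)$.

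The last step is to verify this positivity case by case. For $n\in\{2,4,6\}$ the coefficient $7-n$ is positive and $g_n(0)=2$, so all coefficients of $g_n$ are positive and the conclusion is immediate. For $n\in\{8,10,12,14,16\}$ the coefficient $7-n$ is negative; since $g_n$ has positive leading coefficient and $g_n(0)=2>0$, its minimum on $[0,\infty)$ is attained at the unique positive root $v_n^\ast$ of $g_n'(v)=12v^{2}+18v+(7-n)$, namely $v_n^\ast=\tfrac{1}{24}\bigl(\sqrt{324+48(n-7)}-18\bigr)$. A direct numerical evaluation then gives $g_n(v_n^\ast)>0$ in each of these five cases, the tightest being $n=16$, where $v_{16}^\ast=\tfrac{1}{4}(\sqrt{21}-3)\approx 0.396$ and $g_{16}(v_{16}^\ast)\approx 0.096$.

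The only real obstacle is the sharpness at the upper end. For $n=18$ one already has $g_n(v_n^\ast)<0$, so the present route via Theorem~\ref{even-connected} is essentially optimal and this is precisely why the corollary is stated with the cutoff $n\le 16$. Any extension would require restricting $g_n$ to the true range $[0,nc_r^{n}]$ of $v$ on $[0,c_r]$ (which might still be favorable for some larger $n$), or a dynamical argument avoiding the hypothesis of Theorem~\ref{even-connected} altogether.
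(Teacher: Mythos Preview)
Your proof is correct and follows essentially the same route as the paper. The only cosmetic difference is your substitution $v=nx^n$ in place of the paper's $y=x^n$; the two polynomials are related by $g_n(ny)=G_n(y)$, and the paper further notes that the critical value is monotone decreasing in $n$, so it suffices to check $n=16$ rather than each of the five cases separately.
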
 
The arguments used in the proof of Corollary~\ref{cor-even} does not work for $n>16$. This shall be taken up in our future work.
\par 
 The Newton's method applied to $ze^{z^n}$ has found a mention in \cite{Haruta1999}. We have  completely described the dynamics of $N_{ze^{z^n}}$ (Proposition \ref{New_sym}).
 The set of all Euclidean isometries, (i.e., maps $z \mapsto az +b$ for $|a|=1$ and any compex number $ b$) preserving the Julia set of a rational function $R$ is called its symmetry group, and it is denoted by $\Sigma R$. The relation between the symmetry groups of a polynomial and its Chebyshev's method is investigated in ~\cite{Sym_dym}. We have  shown that $\{z\mapsto \lambda z: \lambda^n=1\}\subseteq \Sigma C_n$ for each $n$ (Lemma \ref{Cheby_sym}). 
 \par 
The structure of the article is as follows. In Section \ref{characterization-cheby}, we prove Theorem~\ref{Characterization}.
Section \ref{properties-chebyshev} contains the statements and proofs of some useful properties  of rational Chebyshev maps.   Section  \ref{C-n}   is dedicated to the dynamics of   $C_{ze^{z^n}}$, where the proofs of Theorems ~\ref{n=1}, \ref{odd-connected} and \ref{even-connected} and Corollary~\ref{cor-even} are provided.
\section{Characterization of rational Chebyshev  maps}
\label{characterization-cheby}
 It is proved in Proposition 2.11, \cite{RC2007} 
that if $f: \mathbb{C} \to \mathbb{C}$ is an entire function then its Newton's method $N_f$ is rational if and only if there are polynomials $p$ and $q$ such that $f=pe^q$. The Halley's method for an entire function $f$ is defined as, 
 $H_f (z)= z -\frac{2 f'(z)f(z)}{2 (f'(z))^2 -f(z)f''(z)}$. The characterization of rational  Halley  maps is the same, and is proved in Theorem 3.6, \cite{CMHD2022}. Taking  $\frac{f}{f'}$ as  $g$, it is seen that \begin{equation}
 g'(z)-\frac{2g (z)}{z-H_f (z)}=-1.
 \label{Halley}
\end{equation}  The proof  of Theorem 3.6,  \cite{CMHD2022}  relies on the fact that Equation~(\ref{Halley}) is a linear and  exact differential equation in $g$ for a given rational $H_f$,  and its solution is $$g(z)= -e^{ \int \frac{2 dz}{ z-H_f (z)}}\left(\int e^{ \int \frac{-2 dz}{ z-H_f (z)}} dz \right).$$ The authors show that $g$ is rational whenever $H_f$ is rational. Then the proof proceeds by analysing the expression of $\frac{f}{f'}$.  We are concerned with a characterization for rational  Chebyshev maps.  If one follows the arguments of Theorem 3.6, \cite{CMHD2022} then  the resulting  differential equation becomes $g(z) g'(z)-3 g(z)=2(C_f (z)-z)$, which is  non-linear. We use Nevanlinna theory to handle the situation. For two  functions $f,g$ from the positive real line into itself, we say $f(x)=o(g(x))$ as $x \to \infty$ if   $\lim_{x \to \infty} \frac{f(x)}{g(x)}=0$. Also we say  $f(x)=\mathcal{O}(g(x))$ as $x \to \infty$ if  $ \frac{f(x)}{g(x)} \leq K$ for some $K>0$ and for all sufficiently large  positive $x$. Note that $f(x)=o(g(x))$ implies $f(x)=\mathcal{O}(g(x))$, but not conversely. We present two lemmas.
\begin{lem}\label{lem1}
[Theorem~1.51, \cite{yy2006}]  For $n \geq 2$, suppose that $f_1, f_2, \dots, f_n $ are meromorphic functions and $g_1, g_2, \dots, g_n$ are entire functions satisfying the following conditions.
	\begin{enumerate}
		\item  $\sum\limits_{j=1}^n f_j(z) e^{g_j(z)} \equiv 0$ and   $g_j(z)-g_k(z)$ are non-constant for all  $1 \leq j<k \leq n$. 
\item For $1 \leq j \leq n $ and $ 1 \leq h<k \leq n$, we have 
	$ 
	T\left(r, f_j\right)=o\left(T\left(r, e^{g_h-g_k}\right)\right) \quad(r \rightarrow \infty, r \notin E),
	$ 
	where $T(r,f)$ is the Nevanlinna characteristic function of $f$ and $E$ is a set with finite linear measure. 
		\end{enumerate}Then $f_j(z) \equiv 0$ for all $j=1,2, \dots, n$.
\end{lem}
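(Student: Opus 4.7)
The plan is to argue by induction on $n$, using the lemma on the logarithmic derivative from Nevanlinna theory as the principal analytic tool. The base case $n=2$ is immediate: if $f_1 e^{g_1} + f_2 e^{g_2} \equiv 0$ with $f_1 \not\equiv 0$, then $e^{g_1 - g_2} = -f_2/f_1$, so
\[
T(r, e^{g_1 - g_2}) \leq T(r, f_1) + T(r, f_2) + O(1) = o(T(r, e^{g_1 - g_2})),
\]
a contradiction; hence $f_1 \equiv 0$ and then $f_2 \equiv 0$ as well.

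For the inductive step, I would assume the statement for $n-1$ terms and take a minimal counterexample with $n$ terms, so that all $f_j \not\equiv 0$. Dividing through by $f_1 e^{g_1}$ produces
\[
1 + \sum_{j=2}^{n} \frac{f_j}{f_1}\, e^{g_j - g_1} \equiv 0,
\]
and differentiating eliminates the constant term:
\[
\sum_{j=2}^{n} F_j\, e^{g_j - g_1} \equiv 0, \qquad F_j := \left(\frac{f_j}{f_1}\right)' + \frac{f_j}{f_1}(g_j' - g_1').
\]
The new exponents $g_j - g_1$ for $j = 2, \ldots, n$ have non-constant pairwise differences (precisely $g_h - g_k$ for $2 \leq h < k \leq n$), so once the growth hypothesis on the $F_j$'s is verified, the inductive hypothesis forces $F_j \equiv 0$ for every $j \geq 2$. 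The resulting linear first-order ODE integrates to $f_j = c_j f_1 e^{g_1 - g_j}$ for constants $c_j$, and minimality forces each $c_j \neq 0$. Then
\[
T(r, e^{g_1 - g_j}) \leq T(r, f_j) + T(r, f_1) + O(1)
\]
yields $T(r, f_j) \geq (1 - o(1))\, T(r, e^{g_1 - g_j})$, which contradicts the hypothesis $T(r, f_j) = o(T(r, e^{g_1 - g_j}))$.

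The main obstacle is verifying that the new coefficients $F_j$ still satisfy $T(r, F_j) = o(T(r, e^{g_h - g_k}))$, which is what allows the inductive hypothesis to be invoked. This reduces to controlling $T(r, (f_j/f_1)')$ and $T(r, g_j' - g_1')$. The lemma on the logarithmic derivative provides $m(r, h'/h) = S(r, h)$, yielding $T(r, h') = O(T(r, h)) + S(r, h)$ outside an exceptional set of finite linear measure; combined with the elementary estimate $T(r, \phi) = o(T(r, e^{\phi}))$ for any non-constant entire $\phi$ (obtained by comparing $\log M(r, \phi)$ with $M(r, \phi)$), this gives the required bound on each $T(r, F_j)$. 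Tracking these exceptional sets across the induction steps and ensuring that their union remains of finite linear measure requires some care, but is a standard feature of Nevanlinna-theoretic arguments and introduces no new idea.
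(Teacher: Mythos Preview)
The paper does not supply a proof of this lemma; it is quoted as Theorem~1.51 of Yang--Yi \cite{yy2006} and used as a black box in the proof of Theorem~\ref{Characterization}. Your outline is the standard inductive scheme behind Borel-type results, and both the base case and the endgame (integrating $F_j\equiv 0$ to obtain $f_j=c_j f_1 e^{g_1-g_j}$ and then contradicting the growth hypothesis) are correct.

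There is, however, a real gap in the inductive step. To apply the $(n-1)$-term case to $\sum_{j\ge 2} F_j\,e^{g_j-g_1}\equiv 0$ you must verify $T(r,F_j)=o(T(r,e^{g_h-g_k}))$ for every pair $2\le h<k\le n$. The dangerous piece of $F_j$ is $(f_j/f_1)(g_j'-g_1')$. Your argument (logarithmic-derivative estimate plus $T(r,\phi)=o(T(r,e^\phi))$) only yields $T(r,g_j'-g_1')=o(T(r,e^{g_j-g_1}))$; but the pair $(1,j)$ is not among the exponent differences of the reduced system, and nothing in general relates $T(r,e^{g_j-g_1})$ to $T(r,e^{g_h-g_k})$ for $h,k\ge 2$. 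Concretely, take $n=3$, $g_1=0$, $g_2=e^z$, $g_3=e^z+z$: then $g_3'-g_1'=e^z+1$ has characteristic $r/\pi+O(1)$, while the sole surviving exponent difference is $g_3-g_2=z$ with $T(r,e^{g_3-g_2})=r/\pi$, so your estimate cannot force $T(r,F_3)=o(r)$ and the induction hypothesis is inapplicable as written. The proof in the cited reference avoids this, for instance by choosing carefully which index to eliminate (in the example above, eliminating the term with exponent $g_2$ or $g_3$ instead of $g_1$ does work) or by a Wronskian-type argument that treats all indices symmetrically; your write-up needs some such device to close the gap.
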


\begin{lem}\label{lem2}
	Let $h$ be a non-constant entire function and $f(z)=e^{h(z)}$.
	Then we have $T(r,h')=o(T(r,f))$ as $r\rightarrow\infty$.
\end{lem}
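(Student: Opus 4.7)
The plan is to reduce the statement to the classical lemma on the logarithmic derivative from Nevanlinna theory. Since $h$ is entire, $h'$ is also entire, so $N(r,h')=0$ and consequently $T(r,h')=m(r,h')$. The identity $f=e^{h}$ then gives $f'/f = h'$, so $m(r,h')=m(r,f'/f)$, and the problem reduces to bounding the proximity function of a logarithmic derivative.

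At this point I would invoke the standard estimate
\[
m(r,f'/f)=O\!\left(\log^{+} T(r,f)+\log r\right) \qquad (r\to\infty,\ r\notin E),
\]
where $E$ is a set of finite linear measure. This exceptional set is harmless for the intended use of Lemma~\ref{lem2}, since the hypothesis of Lemma~\ref{lem1} explicitly allows such a set.

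The remaining task is to show that both $\log r$ and $\log^{+} T(r,f)$ are $o(T(r,f))$. For this I would use that $f=e^{h}$ with $h$ non-constant entire is always a transcendental entire function, so $T(r,f)/\log r\to\infty$. Concretely, if $h$ is a polynomial of degree $n\ge 1$, then $T(r,e^{h})\asymp r^{n}$; if $h$ is transcendental, then $T(r,e^{h})$ grows faster than any power of $r$. In either case $\log r=o(T(r,f))$ and $\log^{+} T(r,f)=o(T(r,f))$, so the logarithmic-derivative bound gives $m(r,f'/f)=o(T(r,f))$, as required.

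No substantive obstacle is expected: the proof is essentially a one-line invocation of the logarithmic-derivative lemma followed by the observation that $e^{h}$ grows strictly faster than any rational function whenever $h$ is non-constant entire. The only mildly delicate point is keeping track of the exceptional set $E$, which, as noted, causes no difficulty when Lemma~\ref{lem2} is fed into Lemma~\ref{lem1}.
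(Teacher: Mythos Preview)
Your proposal is correct and follows essentially the same route as the paper: both identify $T(r,h')=m(r,f'/f)$, invoke the logarithmic-derivative lemma to bound this by $O(\log(rT(r,f)))$, and then use that $f=e^{h}$ is transcendental entire (so $T(r,f)/\log r\to\infty$) to conclude the $o(T(r,f))$ estimate. If anything, you are slightly more careful than the paper in flagging the exceptional set $E$ of finite measure and noting that it is absorbed by the hypothesis of Lemma~\ref{lem1}; the paper's statement and proof of Lemma~\ref{lem2} suppress this set entirely.
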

\begin{proof}
The function $f$ is a transcendental entire function and it follows from Theorem~1.5, \cite{yy2006} that
\begin{equation}
\lim_{r\rightarrow\infty}\frac{T(r,f)}{\log r}=\infty.
\label{c1} \end{equation}
%	On the other hand, it follows from \cite[Theorem~1.47]{yy2006} and \cite[Theorem~2.3.3]{lai1993} (or \cite[Theorem~1.7]{yy2006}) that
%	$$T(r,h')=S(r,f)=m\left(r,\frac{f'}{f}\right)=O(\log (rT(r,f)))~~(r\rightarrow\infty).
%	$$
Clearly $T(r,h')=m(r, \frac{f'}{f})$. It follows from Lemma 1.4',  \cite{yy2006} that $T(r,h')=\mathcal{O}(\log (rT(r,f)))$.
Using  Equation~(\ref{c1}), we have $T(r,h')=o(T(r,f))$ as $r\rightarrow\infty$.
\end{proof}
%
%\begin{thm}\label{Characterization}
%	Let $f: \mathbb{C} \to \mathbb{C}$ be a non-constant entire function. Then the Chebyshev's	method applied to $f$ is   rational   if and only if  $f(z)=p(z)e^{q(z)}$ for two polynomials $p$ and $q$.
%\end{thm}
We now present the proof of Theorem~\ref{Characterization}.
\begin{proof}[ Proof of Theorem~\ref{Characterization}]
If $p$ and $q$ are polynomials and $f=pe^q$ then it is obvious from the definition (\ref{cheby-def}) that $C_f$ is a rational map.	
	\par
Conversely, let $C_f$ be a rational map. Then $C_f$ has at most finitely many attracting fixed points. Since all the roots of $f$ are attracting fixed points of $C_f$, $f$ has  at most  finitely many roots. This gives that
	\begin{equation}
	f(z)=p(z) e^{q(z)},\label{1}
	\end{equation}
	where $p$ is a polynomial and $q$ is an entire  function.  We assert that $q$ is also a polynomial.
	\par
Since $C_f$ is rational, it has at most finitely many poles. It follows from the definition of the  Chebyshev's method  that  
 a pole of $C_f$ is a root of $f^{\prime}$. Let $I_1=\{z:~f^{\prime}(z)=f(z)=0\}, $ $I_2=\{z:~f^{\prime}(z)=0,~\mbox{but}~f(z)f''(z)\neq0 \}$ and $I_3=\{z:~f^{\prime}(z)=f''(z)=0,~\mbox{but}~f(z)\neq0 \}.$  The set $I_1$ is finite since $f$ has at most finitely  many roots. Each point of $I_2$ is clearly a pole of $C_f$. If $z_0\in I_3$ then we can write $f'(z)=(z-z_0)^k \tilde{f_1}(z)$ and $f''(z)=(z-z_0)^{k-1} \tilde{f_2}(z)$ where $k\geq 2$ is a natural number, $\tilde{f_1}$ and $\tilde{f_2}$ are entire functions, both not-vanishing at $z_0$. Therefore, $\frac{f(z) f''(z)}{(f'(z))^2}=\frac{f(z) \tilde{f_2}(z)}{(z-z_0)^{k+1}(\tilde{f_1} (z))^2}$ and $C_f$ has a pole at $z_0$.
	It is easy to see that each point of  $I_3$ and therefore of $I_2 \cup I_3$ is a pole of $C_f$. As the set of poles of $C_f$ is finite,  the set $I_2 \cup I_3$ is also finite.
Since  $\{z:~f^{\prime}(z)=0\}=I_1\cup I_2\cup I_3$, there are at most finitely many roots of $f^{\prime}$. Since $f^{\prime}$ is also entire,
	\begin{equation}
	f^{\prime}(z)=g(z) e^{h(z)}\label{2}
	\end{equation}	
	for some polynomial $g$ and entire function $h$. From Equations (\ref{1}) and (\ref{2}), we have 
	\begin{equation}\label{q_poly}
	p^{\prime}(z)+p(z) q^{\prime}(z)=g(z)e^{h(z)-q(z)}.
	\end{equation}
	If $h(z)- q(z)$ is a constant function then we have $p^{\prime}(z)+p(z) q^{\prime}(z)=k g(z)$ for some constant $k$. This gives that $q^{\prime}(z)$ is a polynomial and hence $q(z)$ is also a polynomial. Thus we are done.
	\par
	Next, we deduce a contradiction when $s(z)=h(z)-q(z)$ is a non-constant entire function.
	According to the above assumption, we have $f(z)=p(z)e^{q(z)}$ and $$f'(z)=(p'(z)+p(z)q'(z))e^{q(z)}=(g(z)e^{s(z)})e^{q(z)}.$$
	Clearly, $$f''(z)=(g'(z)+g(z)s'(z)+g(z)q'(z))e^{q(z)+s(z)}.$$
	It follow from the definition of the Chebyshev's method  and Equation~(\eqref{q_poly}) that
	\begin{align*}
	C_f(z)&=z-\frac{p(z)}{2(g(z))^3}\left(3g^2(z)e^{-s(z)}+(p(z)g'(z)+p(z)g(z)s'(z)-p'(z)g(z))e^{-2s(z)}\right)\\
	&=z-\frac{p(z)}{2(g(z))^3}D_f(z),
	\end{align*}
	where $$
	D_f(z)=3g^2(z)e^{-s(z)}+(p(z)g'(z)-p'(z)g(z)+p(z)g(z)s'(z))e^{-2s(z)}.
$$
We rewrite  the above equality as 
	\begin{equation}\label{dif}
	f_1(z)e^{g_1(z)}+f_2(z)e^{g_2(z)}+f_3(z)e^{g_3(z)}=0,
	\end{equation}
	where $f_1(z)=3g^2(z)$, $f_2(z)=p(z)g'(z)-p'(z)g(z)+p(z)g(z)s'(z)$, $f_3(z)=-D_f(z)$, $g_1(z)=-s(z)$, $g_2(z)=-2s(z)$, and $g_3(z)=0$.
	We observe that $T(r,f_1)=\mathcal{O}(\log r)$ as  $f_1(z)$ is a polynomial.  Since $C_f(z)$ is rational, we have  $D_f(z)$ is rational and therefore  $T(r,f_3)=\mathcal{O}(\log r)$ (see Theorem 2.2.3, \cite{lai1993}). It follows from Equality \ref{c1} that, for $j=1,3$ we have  $T(r,f_j)=o(T(r,e^{g_i -g_k}))$ for $1 \leq i < k \leq 3$.
	For $f_2(z)$, it follows from  Proposition~2.1.11, \cite{lai1993} that
	%$$T(r,f_2)\leq 2T(r,p)+2T(r,q)+T(r,p')+T(r,q')+T(r,s')+\log 3\leq n\log r+T(r,s')~~(r>1)
	%$$
	\begin{align*}
	T(r,f_2)&=T(r,pg'-p'g+pgs')\\ &\leq T(r,pg')+T(r,-p'g)+T(pgs')+\log 3  ~\\
	&\leq T(r,p)+T(r,g')+T(r,-p')+T(r,g)+T(r,p)+T(r,g)+T(r,s')+\log 3\\
	&\leq2T(r,p)+2T(r,g)+T(r,-p')+T(r,g')+T(r,s')+\log 3 ~\\
	&\leq M\log r+T(r,s')+\log 3 
	\end{align*}
	for some positive integer $M$ and all $r>1$. Therefore, $T(r,f_2)=\mathcal{O}(T(r, s'))$.  We have $ T(r,s')=o(T(r,e^s))$ by Lemma \ref{lem2}.
	This means that $T(r,f_2)=o(T(r, e^s))$ and by Proposition 2.1.11,~\cite{lai1993}, $T(r, f_2)=o(T(r, e^{g_i -g_k}))$ for all $1 \leq i < k \leq 3$.
	Applying Lemma  \ref{lem1}  to the Equality \eqref{dif}, we have $f_i(z)\equiv0$ for $i=1,2,3$. In particular $f_1 \equiv 0$. 
	Hence, we get $g(z)\equiv0$, which implies that the function $f$ is a constant. This is a contradiction and the proof is complete.	
\end{proof}
\section{Properties of rational Chebyshev  maps}
\label{properties-chebyshev}
For two polynomials $p$ and $q$, the Chebyshev's method applied to $f= pe^q$ is \begin{equation}C_{f}=Id- \frac{p(2 (p')^2 +3 p^2 (q')^2+6 p p'q'+pp''+p^2 q'')}{2(p'+pq')^3},
\label{form-chebyshev}\end{equation} where $Id(z)=z$. It follows from the above that the Chebyshev's method applied to  $p(z)e^{q(z)}$ is the same as that applied to  $ p(z)e^{q(z)-q(0)}$. We analyze the behavior of $C_f$ using the behavior of the Newton's method $N_f (z)=z -\frac{f(z)}{f'(z)}$. Since  $N'_f(z)=\frac{f(z)f''(z)}{f'(z)^2}$, we have
\begin{eqnarray}
 C_f(z) 
 &&=z-(1+\frac{1}{2}N'_f(z))(z-N_f(z))\label{conversion}\\
 &&=N_f(z)+\frac{1}{2}N_f'(z)(N_f(z)-z)\label{conversion1}.
\end{eqnarray}
It follows from Equation (\ref{conversion}), that each finite fixed point of $N_f$ is also a fixed point of $C_f$. Note that $\infty$ is a fixed point of $N_f$. The following proposition discusses the fixed points of $C_f$.

%\section{Behavior of the fixed points of $C_f$}
\begin{prop} (Fixed points) \label{Property_C_f}
	Let $C_f$ be the Chebyshev's method applied to $f(z)=p(z)e^{q(z)}$, where $p$ and $q$ are non-constant polynomials. Then,
	\begin{enumerate}
		\item If $\alpha$ is a root of $p$ with multiplicity $k$ then its multiplier is $1-\frac{1}{2k}(3-\frac{1}{k})$. In particular, a simple root of $p$ is a superattracting fixed point of $C_f$.
        \item The point at infinity is a parabolic fixed point of $C_f$ with multiplicity $\deg(q)+1$. 
        \item The finite extraneous fixed points of $C_f$ are the solutions of  $1+\frac{1}{2}N_f'(z)=0$. Moreover, if $e_0$ is an extraneous fixed point of $C_f$, then its multiplier is given by $ 1-\frac{1}{2}N_f''(e_0)\frac{f(e_0)}{f'(e_0)}$.
	\end{enumerate}
\end{prop}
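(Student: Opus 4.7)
The plan is to deduce all three parts from Equation~(\ref{conversion}), which reads $C_f = \mathrm{Id} - (1 + \tfrac{1}{2} N_f')(\mathrm{Id} - N_f)$ and couples the fixed-point data of $C_f$ directly to that of the Newton map. Differentiating once gives
\[
C_f'(z) \;=\; 1 - \frac{1}{2} N_f''(z)\bigl(z - N_f(z)\bigr) - \left(1 + \frac{1}{2} N_f'(z)\right)\bigl(1 - N_f'(z)\bigr),
\]
and this differentiated form, combined with the closed expression~(\ref{form-chebyshev}), will carry the whole proof.

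For part~(1), write $f(z) = (z-\alpha)^k g(z)$ with $g(\alpha) \neq 0$. The quotient $f/f'$ has a simple zero at $\alpha$, so $N_f(\alpha) = \alpha$; from the identity $N_f'(z) = f(z) f''(z)/f'(z)^2$ a short computation gives $N_f'(\alpha) = 1 - 1/k$. Evaluating the displayed formula for $C_f'$ at $\alpha$ kills the first correction because $\alpha - N_f(\alpha) = 0$, and substituting $N_f'(\alpha) = 1 - 1/k$ collapses the remaining expression to $1 - \tfrac{1}{2k}(3 - 1/k)$, which vanishes precisely when $k = 1$.

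Part~(3) is equally direct. A finite fixed point of $C_f$ satisfies $(1 + \tfrac{1}{2} N_f'(z))(z - N_f(z)) = 0$; the roots of $f$ are the solutions with $N_f(z) = z$, and they cannot simultaneously zero out $1 + \tfrac{1}{2}N_f'$, since $N_f'$ at a root of $f$ equals $1 - 1/k \in [0,1)$. Hence the finite extraneous fixed points are exactly the zeros of $1 + \tfrac{1}{2} N_f'$. At any such $e_0$ the second summand of the displayed formula vanishes, so $C_f'(e_0) = 1 - \tfrac{1}{2} N_f''(e_0)(e_0 - N_f(e_0))$, and substituting $e_0 - N_f(e_0) = f(e_0)/f'(e_0)$ yields the stated multiplier.

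The main work lies in part~(2). Let $m = \deg p$ and $n = \deg q$ with leading coefficients $a$ and $b$. Among the five summands in the bracketed factor of the numerator of~(\ref{form-chebyshev}), the term $3 p^2 (q')^2$ has strictly largest degree $2m + 2n - 2$, so the whole numerator grows like $3 a^3 b^2 n^2\, z^{3m + 2n - 2}$; similarly $(p' + pq')^3$ is governed by $(pq')^3$ and grows like $a^3 b^3 n^3\, z^{3m + 3n - 3}$. This yields
\[
C_f(z) \;=\; z - \frac{3}{2 b n}\, z^{1-n} + O\bigl(z^{-n}\bigr) \qquad (z \to \infty),
\]
and the conjugate $G(w) = 1/C_f(1/w)$ expands near $0$ as $G(w) = w + \frac{3}{2 b n}\, w^{n+1} + O(w^{n+2})$. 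Thus $G'(0) = 1$ (parabolic) and $G(w) - w$ has a zero of order exactly $n + 1$ at the origin, giving the multiplicity $\deg(q) + 1$. The principal obstacle is precisely this asymptotic bookkeeping: one must verify that no cancellation among the five summands upstairs shifts the leading coefficient $3/(2bn)$, and that all omitted lower-order contributions remain $O(w^{n+2})$ after the conjugation by $w = 1/z$.
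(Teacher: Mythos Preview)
Your proposal is correct. Parts~(1) and~(3) coincide with the paper's approach: both rest on the differentiated conversion identity $C_f' = 1 - \tfrac{1}{2} N_f''\,(z-N_f) - (1+\tfrac{1}{2} N_f')(1-N_f')$, which the paper itself derives in its proof of~(3). For~(1) the paper instead expands $N_f$ locally in powers of $(z-\alpha)$ and substitutes into~(\ref{conversion}); your direct evaluation of the differentiated identity at $\alpha$ is marginally quicker but equivalent. For~(2) you take a genuinely different route: the paper imports the expansion of $N_f$ at $\infty$ from the Newton-map literature and feeds it through~(\ref{conversion}), whereas you read the leading asymptotics of $C_f(z)-z$ straight from the explicit rational expression~(\ref{form-chebyshev}) by singling out the dominant numerator term $3p^2(q')^2$. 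Your argument is self-contained and correctly tracks the leading coefficient $b$ of $q$ (the paper's displayed expansion $g(z)=z+z^{n+1}/n+\cdots$ is written as though $b=1$, which is harmless for the multiplicity claim); the paper's argument has the virtue of keeping all three parts uniformly phrased through $N_f$.
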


\begin{proof}
\begin{enumerate}
\item 	For $z$ in a neighbourhood of $\alpha$, $f(z)=(z-\alpha)^{k}g(z)$, where $g(z)$ is analytic at $\alpha$ with $g(\alpha)\neq 0$. Consequently, in a punctured disc  around $\alpha$, we have 
\begin{align}\label{newton}
\nonumber  N_f(z)& 
=z-\frac{(z-\alpha)^{k}g(z)}{k(z-\alpha)^{k-1}g(z)+(z-\alpha)^{k}g'(z)} \\
&\nonumber  =z-\frac{(z-\alpha)g(z)}{k g(z)+(z-\alpha)g'(z)}=z-\frac{(z-\alpha)}{k}\frac{1}{1+(z-\alpha)\frac{g'(z)}{k  g(z)}}\\
&
=z-\frac{(z-\alpha)}{k}(1+\mathcal{O}(z-\alpha)),
\end{align} 
where $\mathcal{O}((z-\alpha)^k)$ denotes a power series in $(z-\alpha )$ in which the smallest power of $(z-\alpha )$ is  $k\geq 1$. Note that this is the same as defined in the proof of Theorem~\ref{Characterization}. Therefore, 
\begin{equation}\label{derinewton}
N'_f(z)= (1-\frac{1}{k})+\mathcal{O}(z-\alpha).
\end{equation}
Now from Equations (\ref{conversion}), ( \ref{newton}), ~(\ref{derinewton}),
\begin{eqnarray}\label{chebyfixed}
\nonumber &&C_f(z)=
z-\left(1+\frac{1}{2}(1-\frac{1}{k })+\mathcal{O}(z-\alpha )\right)\left(\frac{1}{k }(z-\alpha )+\mathcal{O}(z-\alpha )^2\right)\\
&& \nonumber=z-\frac{1}{2k }(3-\frac{1}{k })(z-\alpha )+\mathcal{O}(z-\alpha)^2 
\end{eqnarray}
Therefore, $C_f(\alpha)=\alpha$ and $C'_f(\alpha )=1-\frac{1}{2k }(3-\frac{1}{k })$, which is less than $1$ for all $k \geq 1$. 
%Thus
%		\begin{gather}{\label{dericheatt}} 
%			C_f'(\alpha_i)=1-\frac{3}{2n_i}+\frac{1}{2n_i^2} 
%			\begin{cases}
%				=0~ \text{if} ~n_i=1\\
%				<1~\text{if}~n_i>1\\
%			\end{cases}.
%		\end{gather}
%		Hence, $\alpha_i$ is an attracting fixed point of $C_f$. 
  In particular, if $\alpha$ is a simple root of $p$ then it is a superattracting fixed point of $C_f$.
\item 
Note that $\infty$ is a fixed point of $N_f$ with multiplicity $\deg(q)+1$ (see Proposition 2.11, \cite{RC2007}) and $ g(z) = \frac{1}{N_f(\frac{1}{z})}  =z+\frac{z^{n+1}}{n}+\mathcal{O}(z^{n+2})$  near the origin where $n =\deg(q)$. Then  $N_f (z) =\frac{1}{g(\frac{1}{z})}$ for all $z$ in a neighborhood of $\infty$. Thus $N_f (z)= \frac{1}{\frac{1}{z} +\frac{1}{nz^{n+1}} +\mathcal{O}(\frac{1}{z^{n+2}})}= z ( 1+\frac{1}{nz^{n}}+\mathcal{O}(\frac{1}{z^{n+1}}))^{-1}$, and this is nothing but   $N_f(z)=z-\frac{1}{nz^{n-1}}+\mathcal{O}(\frac{1}{z^n}).$ 

%
%We have the following expression of $N_f$ in a neighbourhood of $\infty$
%$$N_f(z)=z-\frac{1}{nz^{n-1}}+\mathcal{O}\left(\frac{1}{z^{n}}\right).$$

 Therefore, $N_f'(z)=1+\frac{n-1}{nz^n}+\mathcal{O}\left(\frac{1}{z^{n+1}}\right)$ for all $z$ in a neighborhood of $\infty$ and, by Equation(\ref{conversion}),  we have 
	\begin{eqnarray}\label{chebyfinf}
		\nonumber C_f(z) 
		&&\nonumber=z-\left(1+\frac{1}{2} +\frac{n-1}{2n}\frac{1}{z^n}+\mathcal{O}\left(\frac{1}{z^{n+1}}\right)\right)\left(\frac{1}{nz^{n-1}}+\mathcal{O}\left(\frac{1}{z^{n}}\right)\right)\\
		&& =z-\frac{3}{2n}\frac{1}{z^{n-1}}+\mathcal{O}\left(\frac{1}{z^{n}}\right)
	\end{eqnarray}
It can be seen that $\frac{1}{C_f (\frac{1}{z})}= z+\frac{3}{2n}z^{n+1}+\mathcal{O}(z^{n+2})$, which means  that $\infty$ is a parabolic fixed point of $C_f$ with multiplicity $ n+1 $. 

\item It is clear from  Equation(\ref{conversion1}) that each finite extraneous fixed point of $C_f$ is a solution of $1+\frac{1}{2}N_f'(z)=0$. It can be deduced   that $C_f'(z)=1-\frac{1}{2}N_f''(z)\frac{f(z)}{f'(z)}-(1+\frac{1}{2}N_f'(z))(1-N_f'(z)).$ For the extraneous fixed point $e_0$, we get $1+\frac{1}{2}N_f'(e_0)=0$. Therefore, the multiplier of $e_0$,  $ C_f'(e_0)=1-\frac{1}{2}N_f''(e_0)\frac{f(e_0)}{f'(e_0)}.$  
\end{enumerate}
\end{proof}
\begin{rem}
It follows from the above proof that  Proposition~\ref{Property_C_f}(1) is true for every entire function.  The degree of $q$ determines the nature of $\infty$ as a fixed point of $C_f$.
The point at $\infty$ is a parabolic fixed point of $C_f$ with $\deg(q)$ attracting petals. This follows from the Fatou's Flower Theorem (see Theorem 6.5.4,~\cite{Beardon_book}).
	 \end{rem}

Distinct polynomials may lead to the same function up to conjugacy when the Chebyshev's method is applied to them. 
This is a consequence of the so-called Scaling property of the Chebyshev's method applied to polynomials (Theorem 2.2, \cite{Nayak-Pal2022}). It follows from the same proof that Scaling property holds for the  Chebyshev's method applied to any entire  function. We present this here.
\begin{prop}(Scaling property)\label{scaling}
Suppose that $a,b,\lambda \in \mathbb{C},a \neq 0, \lambda \neq 0$ and $T(z)=az+b$. If $f$ is an entire function  and $g(z)=\lambda f(T(z))$ then $(T\circ C_g \circ T^{-1})(z)=C_f(z)$ for all $z$. In particular, $C_g (z)=C_{\lambda f}(z)$ for all $\lambda$.
\end{prop}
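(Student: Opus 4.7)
The plan is to verify the conjugacy relation $T \circ C_g \circ T^{-1} = C_f$ by direct substitution into the defining formula (\ref{cheby-def}), exploiting two scaling invariances: the Chebyshev correction depends on $f$ only through the two quotients $\frac{f}{f'}$ and $\frac{f f''}{(f')^2}$, and these behave in a very controlled way under both multiplication by a constant and precomposition with an affine map. Since $T$ is a bijection of $\mathbb{C}$, it suffices to prove the equivalent identity $T \circ C_g = C_f \circ T$, which is a pointwise statement and avoids carrying $T^{-1}$ through the argument.

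First I would compute the derivatives of $g(z) = \lambda f(T(z))$ via the chain rule, obtaining $g'(z) = \lambda a\, f'(T(z))$ and $g''(z) = \lambda a^2\, f''(T(z))$. Substituting these into the expressions that appear in (\ref{cheby-def}) gives
\begin{equation*}
\frac{g(z)}{g'(z)} = \frac{1}{a}\cdot\frac{f(T(z))}{f'(T(z))}, \qquad \frac{g(z) g''(z)}{(g'(z))^2} = \frac{f(T(z)) f''(T(z))}{(f'(T(z)))^2},
\end{equation*}
so that the factor $\lambda$ cancels completely and the factor $a$ appears only through the single $1/a$ in $g/g'$. This is the core of the whole computation: the ratio $\frac{f f''}{(f')^2}$ is a logarithmic-type invariant that is untouched by $\lambda$ and by the affine rescaling of the argument, while the overall step length $f/f'$ picks up precisely the factor $1/a$ dictated by the change of variable.

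Plugging into (\ref{cheby-def}) yields
\begin{equation*}
C_g(z) = z - \frac{1}{a}\left(1 + \frac{1}{2}\frac{f(T(z)) f''(T(z))}{(f'(T(z)))^2}\right)\frac{f(T(z))}{f'(T(z))}.
\end{equation*}
Applying $T$ to both sides (so multiplying by $a$ and adding $b$) absorbs the prefactor $1/a$ and turns $az+b$ into $T(z)$, giving exactly the Chebyshev formula for $f$ evaluated at $T(z)$, i.e.\ $T(C_g(z)) = C_f(T(z))$. Conjugation by $T$ then gives the stated identity, and taking $a=1$, $b=0$ (so $T = \operatorname{Id}$) gives the special case $C_{\lambda f} = C_f$. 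There is no real obstacle here beyond bookkeeping; the content is entirely the observation that the two $f$-ratios on which Chebyshev's method depends are respectively homogeneous of degree $0$ and degree $-1$ under $f \mapsto \lambda f$ and behave compatibly under affine pullback.
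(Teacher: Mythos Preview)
Your proof is correct; the direct chain-rule verification that $\frac{g}{g'}=\frac{1}{a}\frac{f\circ T}{f'\circ T}$ and $\frac{gg''}{(g')^2}=\frac{(f\circ T)(f''\circ T)}{(f'\circ T)^2}$, followed by applying $T$, is exactly the intended argument. The paper does not supply its own proof here but simply cites Theorem~2.2 of \cite{Nayak-Pal2022} and remarks that the same computation works for entire functions, so your write-up is in effect filling in what the paper omits. (One cosmetic slip: in your closing sentence both ratios are homogeneous of degree~$0$ under $f\mapsto\lambda f$; the $-1$ you mention refers instead to the behaviour of $f/f'$ under the affine pullback, which you have already used correctly.)
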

%
%\begin{proof}
%	Note that  $T(N_g(z))=N_f(T(z))$ and $N'_g(z)=N'_f(T(z))$.
%% $T^{-1}(z)=\frac{z}{a}-\frac{b}{a}$.
%Now $C_f(T(z))=N_f(T(z))+\frac{1}{2}N_f'(T(z))(N_f(T(z))-T(z))$ and, 
%\begin{align}
%\nonumber T(C_g(z))&=T(N_g(z)+\frac{1}{2}N_g'(z)(N_g(z)-z))=aN_g(z)+\frac{a}{2}N'_g(z)(N_g(z)-z)+b\\
%&\nonumber=(aN_g(z)+b)+\frac{N'_g(z)}{2}(aN_g(z)+b)-\frac{N'_g(z)}{2}(az+b)\\
%&\nonumber=T(N_g(z))+\frac{N'_g(z)}{2}T(N_g(z)) -\frac{N'_g(z)}{2}T(z)\\
%&=N_f(T(z))+\frac{N'_f(T(z))}{2}N_f(T(z))-\frac{N'_f(T(z))}{2}T(z).
%\end{align}
%This gives that $T(C_g(z))=C_f(T(z))$ and consequently, $(T\circ C_g \circ T^{-1})(z)=C_f(z)$. 
%\end{proof}

This article is concerned with the Chebyshev's method applied to $pe^q$ where $p $ is a linear polynomial and $q =p^n$. It is sufficient to consider $p(z)=z$. The next remark implies that something slightly more than this,  is actually true.
\begin{rem}\label{SC2} For  $ a, b, \lambda, c \in \mathbb{C}, a \neq 0, \lambda \neq 0$, let $p(z)=a z+b$ and $f(z)=p(z)e^{\lambda(p(z))^n +c}$.
	It is clear from Equation(\ref{form-chebyshev}) that $c=0$ can be taken without loss of generality. Consider $T(z)=\frac{1}{a}(\alpha z-b)$ where $\alpha^n=\frac{1}{\lambda}$. Then $g(z) =\frac{1}{\alpha} f(T(z))=\frac{1}{\alpha} f(\frac{1}{a}(\alpha z-b))=z e^{z^n}$. Now, $C_g $ is conjugate to $C_f$ by the Scaling property (Proposition \ref{scaling}).  
\end{rem}
Here is also a useful remark.
\begin{rem}
	\begin{enumerate}
\item 
If both $p$ and $q$ are linear polynomials, i.e., $p(z)=az +b$ and $q(z)=cz+d$ for some $a,b,c,d \in \mathbb{C}, a,c \neq 0$ then considering $T(z)= \frac{z }{c}-\frac{b}{a}$ and $\lambda =\frac{c}{a}$ it is seen, by the Scaling property, that $C_{pe^q}$ is affine conjugate to the Chebyshev's method applied to $ze^{z-\frac{bc}{a}+d} $. Now the Chebyshev's method applied to this function  is the same as that applied to     $z e^z$.
	\item If $f=pe^q$ where $p$ is constant and $q$ is non-linear then $C_{f}(z)=z-\frac{3}{2 q'}-\frac{q''}{2(q')^3}$. If $q(z)=z$ then $C_f (z)=z-\frac{3}{2}$. For $q(z)=z^n, n \geq 2$ we have $C_f (z)=\frac{2n^2 z^{2n}-3n z^n -n +1}{2n^2 z^{2n -1}}$. It can be easily  seen  that each extraneous fixed point of $C_f$ is a solution of $z^n =\frac{1-n}{3n}$ and the multiplier of each such fixed point is $1+\frac{9n}{2(n-1)}$. Therefore, all the extraneous fixed points are repelling. 
		\end{enumerate}
	\label{both-linear}
\end{rem}
 As the Newton's method satisfies the Scaling property, Remark \ref{SC2}  also holds for it. Dynamics of the Newton's method applied to $ze^{z^n}$ follows easily. We need the idea of basins to state it. The basin of an    attracting fixed point $z_0$ of a rational function $R$ is defined as the set of all points $z \in \widehat{\mathbb{C}}$ for which $\lim_{m \to \infty} R^m (z)=z_0$. This set is an open subset of $\mathcal{F}(R)$ but it is not necessarily connected. Its connected component containing $z_0$ is called the immediate basin of $z_0$.  The basin of a parabolic fixed point $z_0$ of $R$ is defined slightly differently. It is the set of all points $z \in \widehat{\mathbb{C}}$ for which $R^m (z) \neq z_0$ for any $m$ and   $\lim_{m \to \infty} R^m (z)=z_0$. The immediate basin of $z_0$ is the union of all the connected components of its basin containing $z_0$ on their respective boundaries. It is important to note that it may not be always connected.
\begin{prop}\label{New_sym}
If $f(z)=z e^{z^n}$ for $n \geq 1$ then the symmetry group of its Newton's method $N_f$ is  $\{z \mapsto \lambda z: \lambda ^n =1\}$. Further,  the Fatou set of $N_f$ is the union of the basins of $0$ and $\infty$,  and the immediate basin of $0$ is completely invariant. 
  \label{newton-map}\end{prop}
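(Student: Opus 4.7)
The plan is to derive the closed form of $N_f$, verify the rotational symmetry directly, classify the Fatou components through critical-orbit analysis, and conclude complete invariance of the immediate basin of $0$ via a preimage count. Since $f(z) = ze^{z^n}$ gives $f'(z) = e^{z^n}(1 + nz^n)$, the exponential factor cancels in $N_f(z) = z - f(z)/f'(z)$, yielding $N_f(z) = nz^{n+1}/(1 + nz^n)$, a rational map of degree $n+1$. A direct substitution shows $N_f(\lambda z) = \lambda^{n+1} N_f(z) = \lambda N_f(z)$ whenever $\lambda^n = 1$; commutation of $N_f$ with $z \mapsto \lambda z$ preserves normality of iterates, so $\lambda \mathcal{J}(N_f) = \mathcal{J}(N_f)$, giving the forward inclusion $\{z \mapsto \lambda z : \lambda^n = 1\} \subseteq \Sigma N_f$.

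For the Fatou set structure, differentiating yields $N_f'(z) = nz^n\bigl((n+1)+nz^n\bigr)/(1+nz^n)^2$, so the finite critical points are $0$ (of multiplicity $n$) and the $n$-th roots $c_1,\dots,c_n$ of $-(n+1)/n$. The trivial orbit of $0$ lies in the immediate basin $U_0$ of the superattracting fixed point $0$. A direct calculation gives $N_f(c_k) = (n+1)c_k/n$; to place each $c_k$ in the parabolic basin of $\infty$, the cleanest route is the semi-conjugacy $\phi(z) = 1/z^n$, which intertwines $N_f$ with the polynomial $Q(u) = u(u+n)^n/n^n$ and sends $c_k$ to the critical point $-n/(n+1)$ of $Q$, whose orbit is a sequence in $(-1,0)$ with strictly decreasing magnitude converging to the parabolic fixed point $0$ of $Q$. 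With every critical orbit thus placed in $\mathrm{basin}(0)\cup\mathrm{basin}(\infty)$, the classification of periodic Fatou components and Sullivan's no-wandering-domains theorem force the Fatou set to equal this union. Complete invariance of $U_0$ then follows from a short preimage count: since $N_f(z)=0$ reduces to $nz^{n+1}=0$, one has $N_f^{-1}(0)=\{0\}$; if $V$ is any Fatou component with $N_f(V)\subseteq U_0$, then $N_f(V)=U_0$ (a nonconstant holomorphic map sends a Fatou component onto a Fatou component), so $V$ contains a preimage of $0$, the only such preimage being $0\in U_0$, forcing $V=U_0$.

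For the reverse inclusion $\Sigma N_f \subseteq \{z \mapsto \lambda z : \lambda^n = 1\}$, once $U_0$ is identified as the unique completely invariant Fatou component in $\mathbb{C}$ (for $n\geq 2$ the immediate basin of $\infty$ splits into $n$ petal components, none completely invariant; for $n=1$ a separate treatment via the M\"obius conjugacy $\phi(z) = 1/z$ to $u^2+u$ disposes of the extra completely invariant component), any $\sigma(z)=az+b\in\Sigma N_f$ must fix $U_0$ setwise, and since $0$ is the unique center of $n$-fold rotational symmetry in $U_0$, this forces $b=0$. The conjugate $\sigma^{-1}\circ N_f\circ\sigma$ then equals $na^n z^{n+1}/(1+na^n z^n)$, which must share its Julia set with $N_f$, and a rigidity argument comparing the leading parabolic invariants at $\infty$ forces $a^n=1$. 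The main obstacle I expect is precisely this reverse inclusion, which requires both a uniqueness argument for $U_0$ and parabolic rigidity distinguishing $N_f$ from its nontrivial scalings; everything else follows routinely from the closed form of $N_f$.
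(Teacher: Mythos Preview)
Your arguments for the forward inclusion $\{z\mapsto\lambda z:\lambda^n=1\}\subseteq\Sigma N_f$, the Fatou-set decomposition, and the complete invariance of $U_0$ are correct, but the route differs from the paper's. The paper's single organising observation is that the M\"obius map $\phi(z)=1/z$---which you invoke only as a special device for $n=1$---gives a \emph{genuine} conjugacy $N_f=\phi\circ P\circ\phi^{-1}$ to the polynomial $P(z)=z\bigl(1+\tfrac{1}{n}z^n\bigr)$ for \emph{every} $n$. With this in hand, the Fatou-set structure and the complete invariance of $U_0$ are read off from standard polynomial dynamics (the origin is parabolic for $P$ with $n$ petals absorbing all finite critical points, $\infty$ is superattracting with completely invariant basin), bypassing your $n$-to-$1$ semi-conjugacy to $Q$, the real-interval orbit analysis, the appeal to Sullivan, and the separate preimage count.

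The same conjugacy also handles the reverse inclusion you correctly flag as the obstacle: the paper transports each $\tilde\sigma\in\Sigma N_f$ through $\phi$ to a M\"obius map preserving $\mathcal{J}(P)$ and then cites the classical determination of $\Sigma P$ for a polynomial (Beardon). Your proposed direct route has two genuine soft spots. First, ``$U_0$ is the unique completely invariant Fatou component, hence $\sigma(U_0)=U_0$'' is not immediate: complete invariance is a dynamical notion and an isometry of $\mathcal{J}(N_f)$ need not respect $N_f$-dynamics; you would have to recast the characterisation purely topologically (e.g.\ as the unique component with $\partial U_0=\mathcal{J}(N_f)$, which does follow from complete invariance but must be said). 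Second, and more seriously, neither the claim that $0$ is the unique centre of rotational symmetry of $U_0$ (recall $U_0$ is unbounded, so its Euclidean isometry group is not obviously finite) nor the appeal to ``parabolic rigidity'' forcing $a^n=1$ is routine---distinct rational maps can share a Julia set, so the fact that $\sigma^{-1}\circ N_f\circ\sigma$ and $N_f$ have the same Julia set does not by itself pin down $a$. The polynomial-conjugacy route sidesteps both issues by reducing to a setting where the symmetry group is classical.
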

\begin{proof}
Note that 
\begin{equation}\label{nwt}
N_f(z)=\frac{n z^{n+1}}{n z^n+1} ~~\text{and}~~N_f = \phi \circ P \circ \phi^{-1},\end{equation}
 where $P(z)=z(\frac{1}{n} z^n +1)$ and $\phi(z)=\frac{1}{z}$.
The symmetry group of $P$ is  $\{z \mapsto \lambda z: \lambda ^n =1\}$ and the Julia set of $N_f$ is $\phi(\mathcal{J}(P))$ (Theorem 3.1.4, ~\cite{Beardon_book}). The composition of each rotation  about the origin with $\phi$ remains a rotation about the origin under  and $\phi =\phi^{-1}$. Therefore, for every $\sigma \in \Sigma P,$ we have
$\sigma(\mathcal{J}(P))=\mathcal{J}(P)$, and consequently $\sigma \phi\left(\mathcal{J}\left(N_f\right)\right)=\mathcal{J}(P)=\phi\left(\mathcal{J}\left(N_f\right)\right).$ In other words, 
 $ \phi^{-1} \circ \sigma \circ \phi$  maps the Julia set of $N_f$ onto itself. As this  is an Euclidean isometry, we have $ \phi^{-1} \circ \sigma \circ \phi \in \Sigma N_f$. Conversely, for each $\tilde{\sigma} \in \Sigma N_f$, the M\"{o}bius map  $\phi^{-1} \circ \tilde{\sigma} \circ \phi $ takes the Julia set of $P$ onto itself and is an Euclidean isometry. Thus it is proved that $\Sigma N_f =\{\phi \circ \sigma \circ \phi^{-1}: \sigma \in \Sigma P\}$=$\{z \mapsto \lambda z: \lambda ^n =1\}$.  
\par The polynomial $P$ has a parabolic fixed point at the origin with multiplicity $n+1$ and has a superattracting fixed point at $\infty$. There are $n$ critical points, namely the solutions of $z^n =\frac{-n}{n+1}$, each of which is contained in an attracting petal of $0$. Therefore, the Fatou set of $P$ is the union of the basins of attraction $\mathcal{A}_0$ and $\mathcal{A}_\infty$ corresponding to $0$ and $\infty$ respectively. Clearly, $\mathcal{A}_{\infty}$ is completely invariant.
As $N_f =\phi \circ P\circ \phi^{-1}$, the points $0$ and $\infty$ are superattracting and parabolic fixed points of $N_f$ respectively. Further, the Fatou set of $N_f$ is the union of the basins of $0$ and $\infty$. It is clear that  the basin of $0$ is completely invariant.
The proof is complete.
\end{proof}
The Fatou set of  $N_f$, where $f(z)=ze^{z^3}$ is the union  of the two basins $\mathcal{A}_0$  and $\mathcal{A}_\infty$,  are shown in   yellow and blue respectively (see Figure ~\ref{newton-3}). Note that $\mathcal{A}_\infty$ consists of the three bigger blue lobes (not the smaller one)   and is not completely invariant.
%-------------------------------------
\begin{figure}[h!]
	\begin{subfigure}{.48\textwidth}
		\centering
		\includegraphics[width=0.98\linewidth]{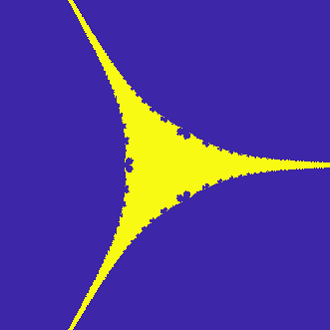}
		\caption{View in $\mathbb{C}$}
	\end{subfigure}
	\begin{subfigure}{.48\textwidth}
		\centering
		\includegraphics[width=0.98\linewidth]{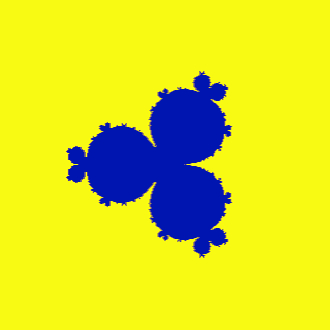}
		\caption{View from $\infty$}
	\end{subfigure}
	\caption{The Fatou set of the  Newton's method applied to $ze^{z^3}$
	}
	\label{newton-3}
\end{figure}
%%-------------------------------

It is proved in Lemma 2.10, \cite{RC2007} that if $g$ is an entire function and  the immediate basin of an attracting fixed point of its Newton's method $N_g$ corresponding to a root of $g$ is  equal to the whole complex plane then $g(z)=a (z-\xi)^d$ for some $d >0 $ and complex numbers $a$ and $\xi$. 
Essentially the same result is proved for the  Halley’s method (Proposition 3.4, \cite{CMHD2022}). For the Chebyshev's method, we establish the following, which is slightly more general in the sense that we do not assume the existence of an attracting fixed point.
% and the afore mentioned two results follow from it. 
\begin{prop}\label{Entire_C}
	Let $g$ be a non-constant entire function. If the Fatou set of $C_g$ is the whole complex plane then $C_g$ is affine conjugate to $C_f$, where $f(z)=z^d$ for some $d \geq 2$ or $f(z)=e^z$.
\end{prop}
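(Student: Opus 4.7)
My approach is to peel off structural constraints on $g$ in three successive reductions, using Theorem~\ref{Characterization} and the fixed-point information from Proposition~\ref{Property_C_f}, together with the fact that the Julia set of a rational map of degree at least two is a perfect set.

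The first reduction forces $C_g$ to be rational. Indeed, the hypothesis $\mathcal{F}(C_g)=\mathbb{C}$ is equivalent to $\mathcal{J}(C_g)\subseteq\{\infty\}$. If $C_g$ were transcendental meromorphic, then by standard transcendental dynamics its Julia set would be a non-empty perfect subset of $\mathbb{C}$, contradicting $\mathcal{J}(C_g)\cap\mathbb{C}=\emptyset$. Hence $C_g$ is rational, and Theorem~\ref{Characterization} gives $g=p e^q$ for polynomials $p$ and $q$.

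The second reduction forces $C_g$ to be a M\"{o}bius transformation. A rational map of degree at least two has an infinite (perfect) Julia set, so $\mathcal{J}(C_g)\subseteq\{\infty\}$ forces $\deg C_g\le 1$; excluding the constant case (which is not affine conjugate to either target map in the conclusion), $C_g$ is M\"{o}bius. Since $\infty$ is always a fixed point of $C_g$---either as a parabolic point by Proposition~\ref{Property_C_f}(2) when $q$ is non-constant, or as a repelling point when $q$ is constant (see also Remark~\ref{both-linear})---$C_g$ fixes $\infty$ and is therefore affine, $C_g(z)=\alpha z+\beta$, so has at most two fixed points on $\widehat{\mathbb{C}}$ counted with multiplicity.

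The decisive step is a fixed-point count via Proposition~\ref{Property_C_f}. When $q$ is non-constant, $\infty$ contributes $\deg q+1$ fixed points with multiplicity, and every root of $p$ is an additional finite fixed point; this already exceeds two unless one of $p,q$ is constant. If $q$ is constant, then $\infty$ contributes only one fixed point, so $p$ has at most one distinct root, giving $p(z)=a(z-\xi)^d$; the Scaling Property (Proposition~\ref{scaling}) then makes $C_g$ affine conjugate to $C_{z^d}$, with $d\ge 2$ since $d=1$ would make $C_g$ constant. If instead $p$ is constant and $q$ is non-constant, the multiplicity bound $\deg q+1\le 2$ forces $q$ linear; writing $g=ce^{az+b}$ and applying the Scaling Property yields that $C_g$ is affine conjugate to $C_{e^z}$.

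The main obstacle I expect is the transcendental-dynamics input in the first step, which requires citing non-emptiness of the Julia set for transcendental meromorphic maps; once we have $g=pe^q$, the remainder is an elementary count organized around Proposition~\ref{Property_C_f} and the Scaling Property.
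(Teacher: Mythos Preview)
Your proof is correct and differs from the paper's in two places. For the first reduction the paper rules out transcendental $C_g$ via Bergweiler's theorem on the abundance of periodic points, while you use that the Julia set of a transcendental meromorphic map is infinite; both are standard. The substantive difference is the final step. The paper works directly with the affine form $C_g(z)=Az+B$: if there is no finite fixed point then $C_g(z)=z+c$ is affine conjugate to $z-\tfrac{3}{2}=C_{e^z}$ (immediate), and if there is one the paper simply asserts conjugacy to $C_{z^d}$ for some $d$---but this last step tacitly requires $A$ to equal one of the specific multipliers $C_{z^d}'(0)$, which the paper does not justify. Your route supplies exactly this justification: invoking Theorem~\ref{Characterization} to write $g=pe^q$ and then counting fixed points against the M\"{o}bius bound of two forces either $q$ constant with $p=a(z-\xi)^d$, or $p$ constant with $q$ linear, after which the Scaling Property delivers the conjugacy with the correct multiplier automatically. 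One minor technical point: Proposition~\ref{Property_C_f}(2) is stated for non-constant $p$ and $q$, so when you invoke the multiplicity $\deg q+1$ at $\infty$ in the case $p$ constant you should remark that the local expansion in its proof (or the formulas in Remark~\ref{both-linear}(2)) still give this. The dismissal of the constant-$C_g$ edge case is equally informal in both arguments.
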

\begin{proof}
No Fatou component of any meromorphic function can  contain two periodic points. If the Fatou set of such a   function is the whole complex plane then it contains at most one finite periodic point and that must be a fixed point. Since each transcendental meromorphic function has infinitely many periodic points with minimal period $m$ for $m\geq 2$ (by Theorem 2, \cite{Ber1993}), the function $C_g$ cannot be  transcendental and therefore, it is a rational map. 
	%	Thus by Theorem \ref{Characterization} we get $g(z)=p(z) e^{q(z)}$, where $p,q$ are polynomials.
	%	
	As $\infty$ is the only point in the Julia set by assumption, we must have $C_g(\infty)=\infty$ and $\infty$ is the only preimage of itself. The local degree of $C_g$ at $\infty$ must be one otherwise   $\infty$ would be a superattracting fixed point   which has to be in the Fatou set of $C_g$, contradicting the assumption. Thus the local degree of $C_g$ at $\infty$ is one and consequently, $C_g$ is a M\"{o}bius map without having any finite pole. Hence $C_g(z)=Az+B$ for some $A, B \in \mathbb{C}, A \neq 0$.
	\par Note that if $f(z)=z^d, d \geq 2$ or $e^z$ then $C_{f} (z)=\frac{d^2 -5d +3}{(d-1)^2}z$ or $z-\frac{3}{2}$ respectively. The proof will be complete by showing that $C_g (z)=Az +B$ is affine conjugate to one of these functions. If $C_g$ has a finite  fixed point then it is conjugate to $\frac{d^2 -5d +3}{(d-1)^2}z$. If $C_g$ has no finite fixed point, i.e., $\infty$ is the only fixed point then it is conjugate to $z-\frac{3}{2}$. The conjugating map in each case fixes $\infty$ and therefore is affine as claimed.
	%
	% It follows from the assumption that the point at $\infty$ cannot be an attracting fixed point of $C_g$. However, $\infty$ can be indifferent or repelling depending on $|A|=1$ or $0<|A|<1$.
	%	\par 	
	%	Consider $|A|=1$.  Then $C_g$ does not have any (finite) attracting fixed point as any such fixed point must be indifferent.  As a root of $g$ is an attracting fixed point of $C_g$ , we get that $p$ has no root i.e., $p$ is constant. Therefore $q$ is non-constant and by Proposition $(\ref{Property_C_f},(2)))$, $\infty$ is a multiple fixed point of $C_g$ with multiplicity $\operatorname{deg}(q) +1$ . As $C_g$ is linear, the multiplicity of $\infty$ is $2$ . Hence $q$ is a linear polynomial. Again by Scaling property,  we that $C_g$ is affine conjugate to $C_{e^z}$.
	%	
	%	\par 
	%	Now consider $0<|A|<1$. As $\infty$ is a repelling fixed point of $C_g$ in this case, $q$ needs to be a constant. Therefore $p$ is a non-constant polynomial and has a finite root point, which must be an attracting but not superattracting fixed point of $C_g$.  This root must have multiplicity $d \geq 2$. Therefore $p(z)=c z^d$, where $c \neq 0$. The Chebyshev's map $C_g$ is affine conjugate to $C_{z^d}$ by Scaling property.
\end{proof}

\section{The Chebyshev's method applied to $ze^{z^n}$}
\label{C-n}
%It follows from Lemma \ref{SC2} that for $f(z)=ze^{z^n}$ and $g(z)=ze^{(az+b)^n+c}$, $C_f$ and $C_g$ are conformally conjugate for some complex numbers $a, b$ and, $c$ with $a\neq 0$. Thus, studying the dynamics of $C_f$ instead of $C_g$ is enough. In this section, we study the dynamics of $C_f$ for such function $f$.
Recall that the Chebyshev's method applied to $f(z)=ze^{z^n}$ is denoted by $C_n$. We have 
%$f'(z)=(1+nz^n)e^{z^n}$ and $f''(z)=n(nz^n+ n+1)z^{n-1}e^{z^n}$. Thus,
 \begin{eqnarray}\label{formula_cheby}
C_n(z)=\frac{nz^{n+1}(2n^2z^{2n}+3nz^n-n+1)}{2(nz^n+1)^3}
\end{eqnarray} and 
\begin{equation}\label{dericheby}
C'_n (z)=\frac{nz^n\left(2n^3z^{3n}+n^2(3n+5)z^{2n}+n(2n^2+3n+4)z^n-(n^2-1)\right)}{2(nz^n+1)^4}.
\end{equation}
 
 \subsection{Properties of $C_n$}
  By saying the dynamics of a rational function $R$ is preserved by a map $\phi$, we mean the Julia set (hence the Fatou set) of $R$ is preserved by $\phi$. Now, we provide some maps preserving the dynamics of $C_n$.
\begin{lem}\label{Cheby_sym}
 
 The Julia set of $C_n$ is preserved under the rotations  $\{ z \mapsto \lambda z: \lambda^n =1\}$ and   under the map $ z \mapsto \overline{z}$. Furthermore, if 
 $n$ is even then the Julia set  is preserved under $z \mapsto -z$.
\end{lem}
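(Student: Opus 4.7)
The plan is to exploit the explicit closed form
\[
C_n(z) \;=\; \frac{nz^{n+1}\bigl(2n^2 z^{2n} + 3n z^n - n + 1\bigr)}{2\,(nz^n+1)^3}
\]
from Equation~(\ref{formula_cheby}) to verify directly that each of the three isometries commutes with $C_n$, and then to invoke the standard fact that a homeomorphism of $\widehat{\mathbb{C}}$ which commutes with a rational map $R$ must preserve $\mathcal{J}(R)$.

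For a rotation $\rho_\lambda(z)=\lambda z$ with $\lambda^n=1$, substituting $\lambda z$ in place of $z$ leaves every occurrence of $z^n$ and $z^{2n}$ unchanged, while the outer factor $z^{n+1}$ acquires an extra $\lambda$. Hence $C_n(\lambda z)=\lambda\, C_n(z)$, i.e.\ $C_n\circ\rho_\lambda=\rho_\lambda\circ C_n$. For the antiholomorphic involution $\sigma(z)=\overline{z}$, I would just note that every coefficient appearing in the numerator and denominator of the displayed formula is real, so $\overline{C_n(z)}=C_n(\overline{z})$, which is precisely $\sigma\circ C_n=C_n\circ\sigma$.

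For even $n$, substituting $-z$ gives $(-z)^n=z^n$, $(-z)^{2n}=z^{2n}$, and $(-z)^{n+1}=-z^{n+1}$ (because $n+1$ is odd), so $C_n(-z)=-C_n(z)$. (Observe that this case is already contained in the first one, since $-1$ is an $n$-th root of unity when $n$ is even, but the direct check is immediate.) In each case let $\phi$ denote the relevant isometry, extended to $\widehat{\mathbb{C}}$ by $\phi(\infty)=\infty$. The identity $\phi\circ C_n=C_n\circ\phi$ iterates to $\phi\circ C_n^m=C_n^m\circ\phi$ for every $m\geq0$; since $\phi$ is a homeomorphism, the family $\{C_n^m\}$ is normal in a neighborhood of $z_0$ if and only if it is normal in the corresponding neighborhood of $\phi(z_0)$. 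For the antiholomorphic case, uniform convergence of a subsequence $C_n^{m_k}$ on a disc about $z_0$ transfers by complex conjugation to uniform convergence on the conjugate disc about $\sigma(z_0)$. Consequently $\phi\bigl(\mathcal{J}(C_n)\bigr)=\mathcal{J}(C_n)$ in each of the three cases.

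There is no real obstacle here: the proof is essentially algebraic bookkeeping with the closed form of $C_n$, and the only mildly non-standard point is the Julia-set transfer under the antiholomorphic map $\sigma$, which is dispatched in one line by noting that complex conjugation preserves local uniform convergence of a sequence of rational functions.
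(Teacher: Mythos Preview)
Your proof is correct and follows essentially the same approach as the paper: both verify directly from the explicit formula that $C_n(\lambda z)=\lambda C_n(z)$ when $\lambda^n=1$, that $\overline{C_n(\overline{z})}=C_n(z)$ because all coefficients are real, and that $C_n$ is odd when $n$ is even, and then conclude that a (holomorphic or antiholomorphic) homeomorphism commuting with $C_n$ preserves $\mathcal{J}(C_n)$. Your observation that the even-$n$ case is already subsumed in the rotation case (since $-1$ is an $n$-th root of unity) is a nice additional remark not made explicit in the paper.
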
 
\begin{proof}
	The dynamics of $C_n$ is preserved under a map $\phi$ if   $\phi \circ C_n \circ \phi^{-1}=C_n$ (see Theorem 3.1.4,  \cite{Beardon_book}). 
	\par 
It follows  from Equation (\ref{formula_cheby}) that $$C_n(\lambda z)=\frac{n \lambda z^{n+1}\left(2 n^2 z^{2 n}+3 n z^n-(n-1)\right)}{2\left(n z^n+1\right)^3} =\lambda C_n(z).$$ 
Thus $C_n$ is  conjugate to itself  via the   map $z \mapsto \lambda z$. Similarly, since all the coefficients of the numerator and denominator polynomials of $C_n$ are real, we have  $\overline{C_n (\overline{z})}=C_n (z)$ for all $z$. Thus, all the rotations  $\{ z \mapsto \lambda z: \lambda^n =1\}$ and  $ z \mapsto \overline{z}$ preserve the dynamics of $C_n$.
\par 
If $n$ is even then $C_n$ is an odd function (see Equation (\ref{formula_cheby})), i.e., $-C_n (-z) =C_n (z)$ and the claim follows similarly.\end{proof} 

 %\subsection{Behavior of the fixed points of $C_g$}
 It follows from Proposition \ref{Property_C_f} that the origin is a superattracting fixed point of $C_n$ and $\infty$ is a parabolic fixed point with multiplicity $n+1$.  Now we study all other fixed points. 
\begin{prop}\label{Extrn}
 	For $  n \geq 1$, the total number of extraneous fixed points of $C_n$ is $2n$ and all are repelling. Moreover, the set of extraneous fixed points is preserved under the maps $z \mapsto \lambda z$ where $\lambda^n=1$.
\end{prop}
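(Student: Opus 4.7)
The plan is to apply Proposition~\ref{Property_C_f}(3), which identifies the finite extraneous fixed points as the solutions of $1 + \tfrac{1}{2}N_f'(z) = 0$, and then to exploit the substitution $w = nz^n$ to reduce the problem to a single quadratic in $w$. For $f(z) = ze^{z^n}$ one computes $f/f' = z/(1+nz^n)$ and
\[
N_f'(z) = \frac{nz^n\bigl[(n+1) + nz^n\bigr]}{(1+nz^n)^2},
\]
so in the variable $w = nz^n$ the fixed-point equation becomes $3w^2 + (n+5)w + 2 = 0$. Its discriminant $(n+5)^2 - 24$ is strictly positive for every $n \geq 1$, giving two distinct real (negative, by Vieta) roots $w_\pm$; neither equals $0$ (constant term $2$) nor $-1$ (the polynomial takes the value $-n$ at $w=-1$), so each of them yields $n$ distinct nonzero solutions of $z^n = w/n$, for a total of $2n$ extraneous fixed points.

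To show every one is repelling, I would use the multiplier formula of Proposition~\ref{Property_C_f}(3). Writing $g = f/f'$ and differentiating twice gives, after simplification,
\[
C_n'(e_0) = 1 + \frac{nw\bigl[(n-1)w - (n+1)\bigr]}{2(1+w)^4}, \qquad w = ne_0^n.
\]
Crucially, the multiplier depends only on $w$, so the $n$ extraneous fixed points coming from a single root of the quadratic share a common multiplier, and I need only treat the two cases $w = w_+$ and $w = w_-$. Using the quadratic relation to substitute $w^2 = -\bigl((n+5)w+2\bigr)/3$, the inequality $C_n'(e_0) > 1$ (which implies $|C_n'(e_0)| > 1$, since $(1+w)^4 > 0$) reduces to
\[
(n^2+7n-2)w + 2(n-1) < 0.
\]
For $n=1$ this reads $6w < 0$, immediate from $w<0$. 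For $n\geq 2$ it amounts to $|w| > 2(n-1)/(n^2+7n-2)$; using the closed form $|w_-| = 4/[(n+5)+\sqrt{(n+5)^2-24}]$ for the smaller root, clearing denominators, isolating the radical and squaring collapses the desired estimate to $12n > 0$, while $|w_+| > |w_-|$ handles the larger root.

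The symmetry claim is immediate from Lemma~\ref{Cheby_sym}: the identity $C_n(\lambda z) = \lambda C_n(z)$ for $\lambda^n = 1$ shows that $z \mapsto \lambda z$ permutes the fixed points of $C_n$, and since it fixes the superattracting point $0$ it must permute the extraneous ones among themselves. The main obstacle is the multiplier estimate in the second step; the key simplification is that $C_n'(e_0)$ is a function of $w = ne_0^n$ alone, which collapses $2n$ multiplier computations to two, and the quadratic relation satisfied by $w$ then reduces each to a linear inequality that survives squaring cleanly.
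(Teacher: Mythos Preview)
Your proof is correct and follows the same overall strategy as the paper: both apply Proposition~\ref{Property_C_f}(3), reduce the extraneous fixed-point equation to a quadratic (you use $w=nz^n$, the paper uses $w=z^n$), observe that both roots are real and negative, and arrive at the same multiplier formula $\lambda = 1 + \dfrac{nw[(n-1)w-(n+1)]}{2(1+w)^4}$. The only substantive difference is in the verification that $\lambda>1$. The paper simply notes that since $w=ne_0^n<0$, the two factors $nw$ and $(n-1)w-(n+1)$ in the numerator are each negative (the latter because $(n-1)w\le 0$ for $n\ge 1$), so their product is positive and $\lambda>1$ in one line. Your route---substituting $w^2=-\tfrac{1}{3}((n+5)w+2)$, reducing to a linear inequality, rationalizing the smaller root and squaring---is valid but considerably longer; the direct sign argument would replace your entire multiplier paragraph. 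On the other hand, you are more careful than the paper in explicitly checking that the quadratic roots avoid $w=0$ and $w=-1$ (so that each value of $w$ really yields $n$ finite non-pole points), a point the paper passes over in silence.
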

\begin{proof}
It follows from Proposition~\ref{Property_C_f}(3) that the finite extraneous fixed points of $C_n$ are the solutions of   $1+\frac{nz^n(nz^n+n+1)}{2(nz^n+1)^2}=0$, i.e., the solutions of 
	\begin{equation}\label{ex1}
		3n^2z^{2n}+n(n+5)z^n+2=0.
	\end{equation}
	Considering $w=z^n$,  Equation (\ref{ex1}) becomes $3n^2w^2+n(n+5)w+2=0$, whose solutions are $$w=\frac{-(n+5)\pm \sqrt{n^2+10n+1}}{6n}.$$ 
	Since $\sqrt{n^2+10n+1}<n+5$, we have $w<0$ for all $n\geq 1$. Each extraneous fixed point of $C_n$ is an $n^{th}$ root of $w$ and satisfies one of the following equations: 
	\begin{equation}\label{ext_f1}
		z^n=\frac{-(n+5)+\sqrt{n^2+10n+1}}{6n}.
	\end{equation}
	
	\begin{equation}\label{ext_f2}
		z^n=\frac{-(n+5)- \sqrt{n^2+10n+1}}{6n}.
	\end{equation}
	
%
%	If an extraneous fixed point is attracting or parabolic, it will have a basin where each point, under the iteration of $C_f$, will successively converge to that extraneous fixed point which is not a root of $f$. However, this is not going to be true.
%	\par
	
If $\lambda$ is the multiplier of an extraneous fixed point $e_0$ of $C_n$, then $\lambda=1-\frac{1}{2}N_f''(e_0)\frac{f(e_0)}{f'(e_0)}$.
Note that $N_f(e_0)=\frac{ne_0^{n+1}}{1+ne_0^n}$,  $N'_f(e_0)=\frac{n[ne_0^{2n}+(n+1)e_0^n]}{(1+ne_0^n)^2}$, and $N''_f(e_0)=\frac{n^2e_0^{n-1}[-n(n-1)e_0^n+n+1]}{(ne_0^n+1)^3}$. Therefore, $\lambda=1-\frac{1}{2}N_f''(e_0)\frac{f(e_0)}{f'(e_0)}=1+\frac{1}{2}\frac{n^2e_0^n[n(n-1)e_0^n-(n+1)]}{(ne_0^n+1)^4}$. Since $e_0$ satisfies either Equation (\ref{ext_f1}) or (\ref{ext_f2}) and $\sqrt{n^2+10n+1}<n+5$, we have $ne_0 ^n<0$. Therefore $\lambda >1$. Hence, all the extraneous fixed points of $C_n$ are repelling. 

\par 
The rest of the theorem follows directly from Equations (\ref{ext_f1}) and  (\ref{ext_f2}).
\end{proof}
The right hand side of Equations (\ref{ext_f1}) and (\ref{ext_f1}) are negative for all $n$, and the existence of real extraneous fixed points is now immediate. 
	\begin{rem}
If $n$ is odd, then there are two real extraneous fixed points and both are negative. If $n$ is even, there is no real extraneous fixed point.
\label{real-extra}
\end{rem}
The case $n=1$ is slightly different from the point of view of critical points. For $f(z)=z e^z$, there are three critical points, namely $\frac{-4\pm i\sqrt{2}}{2}$ and $ 0$ of  $C_f$, and the first two  are simple whereas   $0$ is  with multiplicity two. 
\par 
 Let  $n\geq 2$.
As the degree of $C_n$ is $3n+1$, the map $C_n$ has $6n$ critical points counting with multiplicity. Among them the point  $0$ is   with multiplicity $n$, and it lies in the Fatou set of $C_n$. There are $n$ number of poles of $C_n$ and all those are in $\mathcal{J}(C_n)$, each of which is a critical point with multiplicity two. The remaining $3n$ critical points are referred as \textit{free critical points}. Their forward orbits need to be investigated for determining the dynamics of $C_n$ completely.  We have with the following.
\begin{lem}
%{\bf{The proof is incomplete, Pooja, complete the proof for $n\leq 4$. Also, I changed the statement of the theorem.}}
%	\begin{enumerate}
%		\item If $n$ is odd, $C_g$ has only one real critical point. If $n$ is even, $C_g$ has exactly two real critical points.
%		\item Since $z^n=a_1+ib_1$ and $ z^n=a_1-ib_1$ have the same set of solutions, the multiplicity of these critical points is $3$. Hence, there are exactly $2n $ critical points without counting the multiplicity.
%	\end{enumerate}
If $n \geq 2$ and $F(w)=2n^3w^{3}+n^2(3n+5)w^{2}+n(2n^2+3n+4)w-(n^2-1)$ then the set of all free critical points of $C_n$ is partitioned into three sets $\{z: z^n=r\}$, $\{z: z^n=c\}$ and $\{z: z^n=\bar{c}\}$ where $r$   and $c$ are the real (positive) and  non-real roots of $F(w)=0$ respectively. Furthermore, $r \in (0,1)$.
\label{criticalpoints}
\end{lem}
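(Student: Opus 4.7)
The plan is to identify the free critical points of $C_n$ with the zeros of $F(z^n)$ via a Riemann--Hurwitz count, and then to analyze the three roots of the cubic $F$.

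From Equation~(\ref{dericheby}), the zeros of $C_n'$ in $\mathbb{C}$ are $z=0$ (of multiplicity $n$, from the leading $z^n$ factor) together with the $3n$ solutions of $F(z^n)=0$. The $n$ poles of $C_n$, namely the solutions of $nz^n+1=0$, are also critical points of the rational self-map $C_n$ of $\widehat{\mathbb{C}}$: the denominator has a triple zero at each pole while the numerator is nonzero there, so the local degree is $3$ and each pole contributes critical multiplicity $2$. This accounts for $n+2n+3n=6n$ critical points with multiplicity, matching $2\deg(C_n)-2=6n$ from Riemann--Hurwitz. Removing the superattracting fixed point $0$ and the $n$ poles (which lie in $\mathcal{J}(C_n)$), the $3n$ free critical points are precisely the solutions of $F(z^n)=0$, and they partition as $\{z:z^n=w\}$ indexed by the roots $w$ of $F$ once $F$ is known to have three distinct roots.

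Next I would pin down the roots of $F$. The coefficient pattern of $F(w)$ is $(+,+,+,-)$, so Descartes' rule of signs gives exactly one positive real root, which I call $r$. The evaluations $F(0)=-(n^2-1)<0$ and $F(1)=7n^3+7n^2+4n+1>0$ together with the intermediate value theorem place $r$ in $(0,1)$. What remains is to rule out any negative real root of $F$, equivalently to show that $F$ has a unique real root.

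The main obstacle is this uniqueness step. The discriminant of $F'(w)=6n^3w^2+2n^2(3n+5)w+n(2n^2+3n+4)$ as a quadratic in $w$ equals $4n^4(-3n^2+12n+1)$, which is strictly negative for all $n\geq 5$. Hence for $n\geq 5$ the derivative $F'$ has no real zeros, so $F'>0$ on $\mathbb{R}$, $F$ is strictly increasing, and $F$ has a unique real root. For the remaining cases $n\in\{2,3,4\}$, Vieta shows that the two real zeros of $F'$ both lie on the negative axis (their sum is $-(3n+5)/(3n)<0$ and their product is $(2n^2+3n+4)/(6n^2)>0$), and I would verify by direct substitution that at the smaller of these two critical points (the local maximum of $F$ on $\mathbb{R}$) the value of $F$ is strictly negative. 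Since the leading coefficient of $F$ is positive, this forces $F<0$ on the entire half-line $(-\infty,0]$, so once again the only real root of $F$ is the positive root $r$; the other two roots of $F$ then form a non-real complex-conjugate pair $c,\bar c$, giving the three-way partition of the free critical points and the bound $r\in(0,1)$.
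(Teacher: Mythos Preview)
Your proof is correct and follows essentially the same approach as the paper: identify the free critical points with the solutions of $F(z^n)=0$, locate a positive root in $(0,1)$ via $F(0)<0<F(1)$, then split into the cases $n\geq 5$ (where the discriminant of $F'$ is negative so $F$ is strictly increasing) and $n\in\{2,3,4\}$ (where one checks the local maximum value of $F$ is negative). Your use of Descartes' rule and the Riemann--Hurwitz count are pleasant additions but not essential differences.
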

\begin{proof}
It follows from Equation (\ref{dericheby}) that the free critical points of $C_n$ are the solutions of
\begin{equation}\label{cri}
2n^3z^{3n}+n^2(3n+5)z^{2n}+n(2n^2+3n+4)z^n-(n^2-1)=0.
\end{equation}
Substituting $w=z^n$ in Equation (\ref{cri}), we have $F(w)=0$, where
\begin{equation}\label{wcri}
F(w)=2n^3w^{3}+n^2(3n+5)w^{2}+n(2n^2+3n+4)w-(n^2-1).
\end{equation}
Since $F(0)=-(n^2-1)<0$ and $F(1)=7n^3+7n^2+4n+1>0$,  $F(w)=0$  has a real root between $0$ and $1$. Let this be denoted by $r$. Since $F'(w)=n(6n^2w^2+2n(3n+5)w+2n^2+3n+4)>0$ for all $w>0$, $r$ is the only positive root of $F$. We are to show that it is the only real root of $F$.
\par
For $n >4$, all the roots of $F'(w)=0$ are   non-real  as the discriminant $4n^2(3n+5)^2-24n^2(2n^2+3n+4)=-4n^2[3n(n-4)-1]<0$. Since $F'(0)>0$, we have $F'(w)>0$ for all real $w$ and therefore $F$ is strictly increasing. Thus, $r$ is the only real root of $F$ for all $n>4$.
\par  For $n=2,3$ or $4$, $-4n^2[3n(n-4)-1]>0$ and therefore  $F'(w)$ has two distinct real roots, say $x_1, x_2$ with $x_1 < x_2$.  Further, $ x_1, x_2 <0$ and $F'(w)>0$ for all $w<x_1$, $F'(w)<0$ for all $x_1 < w<x_2$ and $F'(w)>0$ for all $ w>x_2$. This implies that $F$ is increasing in $(-\infty, x_1)$, decreasing in $(x_1, x_2)$ and increasing thereafter. It can be seen by simple calculation that $F(x_1)<0$. This along with $F(0)<0$ show that   $F$ has no negative root. Thus $r$ is the only real root of $F$ for $n = 2,3,4$.

\par The other two roots of $F$ are non-real and  are complex conjugates of each other. Let them be denoted by $~c  \text{ and } \bar{c}$. Thus, the critical points of $C_n$ are the solutions of 
\begin{equation}\label{set_cr_pt}
    z^n=r,~ z^n=c, \text{ and } z^n=\bar{c}.
\end{equation}
\par 
The set of all free critical points of $C_n$ is the union of  $\{z: z^n=r\}$, $\{z: z^n=c\}$ and $\{z: z^n=\bar{c}\}$.  
\end{proof}
\begin{rem}
	\begin{enumerate}
		\item The set of all critical points of $C_n$ is invariant under the rotations $z \mapsto \lambda z$ with $\lambda^n =1$. 
	\item 
  Lemma~\ref{criticalpoints} along with the fact that the Julia set of $C_n$ is symmetric about the real axis give that there are essentially two free critical points.
    \end{enumerate}
\end{rem}
Now we look at the zeros of $C_n$. For $n=1$, it is seen from Equation (\ref{formula_cheby})  that the origin is a root with multiplicity three and the other root is $-\frac{3}{2}$. 
\begin{lem}
	For $n >1,$ there are three distinct real roots of $C_n$ including one at the origin.
	\label{roots}
\end{lem}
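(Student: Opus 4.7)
The plan is to factor out the obvious zero at the origin from the numerator of $C_n$ and then reduce the remaining polynomial equation to a quadratic in $w=z^n$. From Equation (\ref{formula_cheby}), the finite zeros of $C_n$ are exactly $z=0$ (coming from the factor $z^{n+1}$) together with the solutions of
\begin{equation*}
2n^2 z^{2n}+3n z^n-(n-1)=0.
\end{equation*}
Substituting $w=z^n$ turns this into $2n^2 w^2+3nw-(n-1)=0$, whose discriminant $9n^2+8n^2(n-1)=n^2(8n+1)$ is strictly positive for $n\geq 1$, yielding two distinct real roots
\begin{equation*}
w_{\pm}=\frac{-3\pm\sqrt{8n+1}}{4n}.
\end{equation*}
Since $n>1$ implies $\sqrt{8n+1}>3$, the root $w_{+}$ is strictly positive and $w_{-}$ is strictly negative.

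I would then do a quick case analysis based on the parity of $n$ to count real $n$-th roots of $w_{\pm}$. If $n$ is odd, the equation $z^n=w_{+}$ has a unique real solution $w_{+}^{1/n}>0$, and $z^n=w_{-}$ has a unique real solution $-|w_{-}|^{1/n}<0$. If $n$ is even, $z^n=w_{+}$ has exactly two real solutions $\pm w_{+}^{1/n}$, while $z^n=w_{-}$ has none since $w_{-}<0$. In either case, the non-zero real roots of $C_n$ are one strictly positive number and one strictly negative number, which together with $0$ give exactly three distinct real roots.

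No serious obstacle is anticipated; the only thing to be careful about is verifying that the quadratic in $w$ has one positive and one negative root (so that in the even case the negative value $w_{-}$ does not contribute), and that in the odd case the real root arising from $w_{-}$ is distinct from the one arising from $w_{+}$, which is immediate from their opposite signs.
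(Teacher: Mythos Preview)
Your proof is correct and follows essentially the same route as the paper's own argument: factor out the origin from the numerator in Equation~(\ref{formula_cheby}), reduce the remaining factor to the quadratic $2n^2w^2+3nw-(n-1)=0$ via $w=z^n$, obtain $w_\pm=\frac{-3\pm\sqrt{8n+1}}{4n}$, and split into cases by the parity of $n$. Your explicit justification that $\sqrt{8n+1}>3$ for $n>1$ (hence $w_+>0$ and $w_-<0$) is a small improvement in clarity over the paper, which uses these signs implicitly.
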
 
\begin{proof}

\par For $n >1$, the origin is a zero of $C_n$ with multiplicity $n+1$. The other zeros are the solutions of $ 2 n^2 z^{2 n}+3 n z^n-(n-1)=0$.
These are the solutions of
$$
z^n=\frac{-3+\sqrt{8 n+1}}{4 n} \text { or } z^n=\frac{-3-\sqrt{8 n+1}}{4 n} \text {. }
$$

If $n$ is even, then there are two real solutions of $z^n=\frac{-3+\sqrt{8 n+1}}{4 n}$ and no real solution of $z^n=\frac{-3-\sqrt{8n+1}}{4 n}$. If $n$ is odd, then there is one real solution of $z^n=\frac{-3+\sqrt{8n+1}}{4 n}$ and also one real solution of $z^n=\frac{-3-\sqrt{8n+1}}{4 n}$. In each of these cases,  $C_n$ has exactly two non-zero real zeroes. For even $n$, these are of the same modulus.
%
% For odd $n$, we denote the positive zero of $z^n=\frac{-3+\sqrt{8n+1}}{4 n}$ by $z_0$ and the negative zero of $z^n=\frac{-3-\sqrt{8n+1}}{4 n}$ by $z_1$.
\end{proof}
 We denote the positive zero of $C_n$ by $z_0$ when $n$ is even. For odd $n$, let the positive and the negative zeros of $C_n$ be denoted by $z_{+}$ and $z_{-}$ respectively.
 
%  Now we are ready to discuss the dynamics of $C_n$.
% 

\subsection{The dynamics of $C_n$}
%%%%------------
Recall that $\infty$ is a rationally indifferent fixed point of $C_n $ with multiplicity $(n + 1)$ and let $\mathcal{A}_{\infty}$ denote the immediate basin of $\infty$.

\begin{lem}\label{pole_bdry}
	 The boundary of the immediate basin of $\infty$ contains all the  poles of $C_n$.
\end{lem}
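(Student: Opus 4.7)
The plan is to show that every pole $p$ of $C_n$ lies in $\partial\mathcal{A}_\infty$. Since $C_n(p)=\infty\in\mathcal{J}(C_n)$ and the Julia set is backward invariant, $p\in\mathcal{J}(C_n)$, so the task reduces to exhibiting points of $\mathcal{A}_\infty$ in every neighborhood of $p$.

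First I will assemble two structural facts. By Proposition~\ref{Property_C_f}(2) together with the subsequent remark, $\infty$ is a parabolic fixed point of $C_n$ with multiplier one and $n$ attracting petals, distributed among $n$ fixed Fatou components $W_1,\ldots,W_n$ whose union is $\mathcal{A}_\infty$. The discussion preceding Lemma~\ref{criticalpoints} identifies each pole as a critical point of $C_n$ of multiplicity two, so $C_n$ restricts to a three-to-one branched cover from a small neighborhood of $p$ onto a neighborhood of $\infty$, ramified only at $p$.

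Given a neighborhood $U$ of $p$, openness of $C_n$ makes $C_n(U)$ a neighborhood of $\infty$, and $\infty\in\partial W_j$ allows me to pick $w\in W_j\cap C_n(U)$ arbitrarily close to $\infty$. Next I choose a path $\gamma\colon [0,1]\to\overline{W_j}$ with $\gamma(0)=w$, $\gamma(1)=\infty$, and $\gamma([0,1))\subset W_j$ travelling to $\infty$ along an attracting petal. Lifting $\gamma$ by each of the three inverse branches of $C_n$ near $p$ yields paths $\widetilde\gamma$ in $\overline{U}$ whose endpoints are preimages of $\infty$; for $U$ small enough each such endpoint must be $p$ itself. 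The lifted path then lies in a Fatou component $W'$ with $C_n(W')=W_j$ and $p\in\partial W'$.

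The hard part is identifying $W'$ with one of the immediate-basin components $W_i$, i.e., verifying $\infty\in\partial W'$. The plan for this step is a covering/counting argument: the only preimages of $\infty$ under $C_n$ are $\infty$ itself and the $n$ poles, whose local degrees sum to $3n+1=\deg C_n$; comparing the three inverse-branch sectors at each pole with the $n$ fixed components $W_1,\ldots,W_n$ of $\mathcal{A}_\infty$ should force the Fatou component adjacent to each pole to coincide with some $W_i$ rather than being a genuinely new preimage component. Once this identification is established, $p\in\partial W_i\subseteq\partial\mathcal{A}_\infty$ and the proof is complete.
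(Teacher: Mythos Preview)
Your argument has a genuine gap at exactly the place you flag as ``the hard part.'' The counting you propose---local degree $1$ at $\infty$ plus local degree $3$ at each of the $n$ poles sums to $3n+1=\deg C_n$---only confirms that $\infty$ and the poles are the complete preimage of $\infty$. It does \emph{not} force the Fatou component $W'$ through your lifted path to coincide with some $W_i$. A priori $W'$ could be a strict preimage component of $W_j$ (lying in the parabolic basin but not in the immediate basin) that happens to have $p$ on its boundary; nothing in your degree count excludes this. Concretely, if $d=\deg(C_n|_{W_j})$, then $d-1$ of the $3n$ pole-adjacent preimages of a point of $W_j$ near $\infty$ lie in $W_j$, and the remaining $3n-d+1$ lie in other components; your counting gives no information about $d$.

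The paper closes this gap with two ingredients you do not use. First, every immediate parabolic basin component contains a critical point of $C_n$, so the degree of $C_n$ on $\mathcal{A}_\infty$ is at least $2$; hence a point of $\mathcal{A}_\infty$ near $\infty$ has a second preimage in $\mathcal{A}_\infty$, which must lie near some pole. A limiting argument then puts \emph{one} pole on $\partial\mathcal{A}_\infty$. Second, the rotational symmetry of Lemma~\ref{Cheby_sym} simultaneously permutes the poles and preserves $\mathcal{A}_\infty$, upgrading ``one pole'' to ``all poles.'' Your attempt to reach every pole directly, without invoking either the critical-point-in-basin fact or the symmetry, is what leaves the argument incomplete.
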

\begin{proof}
	The map
$C_n$ has $n$ distinct poles, say $\omega_1, \omega_2, \ldots, \omega_n$, which are preserved under the rotations of order $n$ about the origin. The local degree of $C_n$ at each pole  is $3$.
\par  Consider a sufficiently small neighbourhood of $\infty$, $B_s=\{z: \rho(z, \infty)<s\}$ where $ s>0$ and $\rho$ is the spherical metric, such that $B_s$ doesn't contain any finite critical value. As $B_s$ is chosen to be sufficiently small, the set $C_n^{-1}(B_s)$ has $n+1$ many distinct components, one of which, say $N_\infty$ contains $\infty$. Every other component  contains a single pole. Denote the component containing $\omega_i$ by $N_{i},$ for $ i=1,2, \ldots, n$ and observe that the degree of $C_n$ on each $N_{i}$ is three. Further, $C_n$ is one-one in $N_\infty$.
\par 
Let $\xi \in B_s \cap \mathcal{A}_{\infty}$. As the local degree of $C_n$ in $\mathcal{A}_{\infty}$ is at least two (since $\mathcal{A}_{\infty}$ contains at least one critical point of $C_n$ ), $\xi$ has at least two preimages in $\mathcal{A}_{\infty}$; one is in $N_\infty$ and the others are in $N_{i} \cap \mathcal{A}_{\infty}$ for some $i \in\{1,2, \ldots, n\}$. This is true for every $s'<s$ and for each non-critical value $\xi $ belonging to $ B_s \cap \mathcal{A}_{\infty}$. By considering a sequence $ s_n    \rightarrow 0$ and $\xi_n \in B_{s_n} \cap \mathcal{A}_{\infty}$ so that each $\xi_k$ is distinct and $\xi_k \rightarrow \infty$, we get a sequence $z_k$ in $(  \bigcup\limits_{i=1}^{n} N_{i}) \cap \mathcal{A}_{\infty}$ such that $C_n\left(z_k\right)=\xi_k$.
Since $z_k \neq z_{k'}$ for all $k \neq  k'$,  there is a subsequence $\left\{z_{k_n}\right\}$ and $j^* \in\left\{1,2, \ldots, n\right\}$ such that $z_{k_m} \in N_{\omega_{j *}} \cap \mathcal{A}_{\infty}$ for all $m$. This subsequence has a limit point and that can not be anything but a pole of $C_n$. Since each $z_{k_n}  \in \mathcal{A}_{\infty}$, this pole is on the boundary $\partial \mathcal{A}_{\infty}$ of $\mathcal{A}_{\infty}$. Therefore, $\partial \mathcal{A}_{\infty}$ contains one pole. By Lemma~\ref{Cheby_sym}, the Fatou set $\mathcal{F}(C_n)$ is preserved under rotations of order $n$. Since $\infty$ is fixed under these rotations, each component of $\mathcal{A}_{\infty}$ is preserved under these rotations. As the poles of $C_n$ are also preserved under these rotations, we have that  $\partial \mathcal{A}_{\infty}$ contains all the poles of $C_n$.

\end{proof}	
 
%\begin{thm}
%    If each component of $\mathcal{A}_{\infty}$ is simply connected then the Julia set $\mathcal{J}(C_f)$ is connected.
%\end{thm}
Using Lemma \ref{pole_bdry}, we obtain some situations under which the Julia set is connected.

\begin{lem}\label{Connected_J_set}
If $U$ is a Fatou component such that $C_n ^{k}(U) \cap \mathcal{A}_\infty =\emptyset$ for each $k$ then $U$ is simply connected. In particular, the immediate basin $\mathcal{A}_{0}$ of $0$ is simply connected. Moreover, if either  $\mathcal{A}_{0}$ is unbounded or each component of $\mathcal{A}_\infty$ is simply connected then the Julia set of $ C_n $ is connected.
\end{lem}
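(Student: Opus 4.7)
The plan is to handle the three assertions in sequence, with Lemma~\ref{pole_bdry} as the main structural input. For simple connectivity of $U$, I would first observe that $U$ is bounded in $\mathbb{C}$: if $\infty \in \overline U$, then $U$ is a Fatou component having $\infty$ on its boundary, which forces $U \subseteq \mathcal{A}_\infty$ and contradicts $U \cap \mathcal{A}_\infty = \emptyset$. Next I would rule out any pole on $\partial U$: a pole on $\partial U$ would, by Lemma~\ref{pole_bdry}, place $\infty$ on $\partial C_n(U)$ and hence force the Fatou component $C_n(U)$ into $\mathcal{A}_\infty$, contradicting the forward-orbit assumption.

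Supposing for contradiction that $U$ is not simply connected, I would take a bounded complementary component $K$ of $\widehat{\mathbb{C}}\setminus U$. Every component of $\mathcal{A}_\infty$ is unbounded in $\mathbb{C}$, and no Fatou component can cross $\partial K \subseteq \mathcal{J}(C_n)$, so $K \cap \mathcal{A}_\infty = \emptyset$; combined with the previous step this shows $K$ contains no pole, so $C_n|_K$ is holomorphic with $C_n(\partial K) \subseteq \mathcal{J}(C_n)$. The main obstacle is to convert this setup into a contradiction: my approach is to inductively track the pole-free compact iterates $C_n^k(K)$, combining normality of $\{C_n^k\}$ on the Fatou components in $K^\circ$ with the argument principle on $\partial K$, to extract a periodic point of some iterate sitting in $K$ whose Fatou component then meets $U$ (or lies in a cycle whose basin does), violating the hypothesis. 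The corollary that $\mathcal{A}_0$ is simply connected is immediate, since $\mathcal{A}_0$ is forward invariant (containing the fixed point $0$) and disjoint from $\mathcal{A}_\infty$.

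For connectedness of $\mathcal{J}(C_n)$, I would use the standard equivalence that the Julia set is connected precisely when every Fatou component is simply connected. By the first part, only components eventually mapping into $\mathcal{A}_\infty$ require further attention. Under the hypothesis that each component of $\mathcal{A}_\infty$ is simply connected, I would propagate simple connectivity backward by Riemann--Hurwitz: outside $\mathcal{A}_0$ the critical points of $C_n$ are the poles (on $\partial \mathcal{A}_\infty$) and the free critical points (which the structural analysis places inside $\mathcal{A}_\infty$), so no preimage acquires extra ramification and simple connectivity is preserved. Under the alternative hypothesis that $\mathcal{A}_0$ is unbounded, $\infty \in \partial \mathcal{A}_0$, and combining the simply connected, invariant component $\mathcal{A}_0$ with the parabolic flower at $\infty$ pins down enough of the Fatou set to preclude any multiply connected component. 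The delicate point throughout is the bookkeeping of free critical orbits, which I expect to rely on the structural results established earlier.
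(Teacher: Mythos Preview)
Your plan for the first assertion rests on the implication ``$\infty \in \partial V \Rightarrow V \subseteq \mathcal{A}_\infty$'' for a Fatou component $V$, which you invoke both to argue that $U$ is bounded and that $\partial U$ contains no pole. This implication is false: the very hypothesis in the second clause of the lemma is that $\mathcal{A}_0$ may be unbounded, in which case $\infty \in \partial\mathcal{A}_0$ while $\mathcal{A}_0 \cap \mathcal{A}_\infty = \emptyset$, and the paper goes on to show that in that situation every pole lies on $\partial\mathcal{A}_0$. With those two steps gone, your route to ``$K$ is pole-free'' collapses, and the contradiction you sketch---which you yourself flag as the main obstacle and leave as a vague periodic-point extraction---has no foundation. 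The paper avoids all of this: it takes a non-contractible Jordan curve $\gamma \subset U$ and iterates forward. Since backward orbits of the poles are dense in $\mathcal{J}(C_n)$, some $C_n^m(\gamma)$ must surround a pole; by Lemma~\ref{pole_bdry} that pole lies on $\partial\mathcal{A}_\infty$, and since $C_n^m(\gamma) \subset C_n^m(U)$ is disjoint from $\mathcal{A}_\infty$ by hypothesis, the bounded region cut out by the curve would have to contain the unbounded set $\mathcal{A}_\infty$. No boundedness of $U$ and no analysis of $\partial U$ is needed.

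For the second assertion your Riemann--Hurwitz propagation assumes that the free critical points lie in $\mathcal{A}_\infty$, which is not established at this point and is in fact false for the real critical point, which the later theorems place in $\mathcal{A}_0$. The paper's argument is critical-point-free: under either hypothesis it exhibits a single unbounded connected subset of $\mathcal{J}(C_n)$ containing all the poles---namely $\partial\mathcal{A}_0$ when $\mathcal{A}_0$ is unbounded (using Lemma~4.3 of \cite{Nayak-Pal2022} together with the rotational symmetry of Lemma~\ref{Cheby_sym}), or $\partial\mathcal{A}_\infty$ when each component of $\mathcal{A}_\infty$ is simply connected. Then the curve-iteration argument from the first paragraph applies verbatim to \emph{any} multiply connected Fatou component: a forward iterate of a non-contractible curve would enclose a pole and hence this unbounded Julia piece, a contradiction.
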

\begin{proof}
%	If $\mathcal{A}_0$ is not simply connected then there exists a Jordan curve $\gamma$ in $ \mathcal{A}_0$ surrounding some bounded component of the Julia set. As the closure of the backward orbit of a pole is dense in the Julia set, there will exist $t$ such that $C_f^t(\gamma)$ is a closed curve surrounding a pole. This is not possible as all the poles are in $\partial \mathcal{A}_\infty$ whose all components are unbounded but $C_f^t(\gamma)$ is a bounded curve lying in $\mathcal{A}_0$. Therefore, $\mathcal{A}_0$ is simply connected. 
	If $U$ is not simply connected then there exists a Jordan curve $\gamma$ in $ U$ surrounding some bounded component of the Julia set i.e., some bounded component of the Julia set is contained in a bounded component of $\widehat{\mathbb{C}} \setminus \gamma$. As the closure of the backward orbit of a pole is dense in the Julia set, there  exists an $m$ such that $C_n^m(\gamma)$ is a closed curve surrounding a pole.  By assumption, $C_n^m(U) \cap \mathcal{A}_{\infty} = \emptyset$. Therefore   $\mathcal{A}_\infty$ is surrounded by $C_n^m(\gamma)$  by virtue of Lemma~\ref{pole_bdry}. But this is a contradiction to the fact that $\mathcal{A}_\infty$ is unbounded. Therefore, $U$ is simply connected. Since $\mathcal{A}_0$ is invariant, $C_n ^{k} (\mathcal{A}_0)$ is disjoint from $\mathcal{A}_\infty$ for each $k$. Therefore $\mathcal{A}_0$  is simply connected.
 \par 
     To prove the second part of the theorem,  note that an unbounded invariant attracting domain contains at least one pole on its boundary (see Lemma 4.3, \cite{Nayak-Pal2022}). If $\mathcal{A}_0$ is unbounded then the boundary of $\mathcal{A}_0$ contains at least one pole, hence all the poles of $C_n$. This is because each rotation of order $n$ preserves the set of all poles as well as $\mathcal{A}_0$.  
     The boundary of $\mathcal{A}_0$ is an unbounded Julia component containing all the poles.
     If $\mathcal{A}_{\infty}$ is simply connected then, its boundary is also an unbounded Julia component containing all the poles by Lemma~\ref{pole_bdry}. Now, if there is a mutiply connected Fatou component, then by considering a non-contractible Jordan curve in it and arguing as in the first paragraph of this lemma, we get a contradiction. 
     
     This proves that each Fatou component is simply connected. In other words,    $\mathcal{J}(C_n)$ is connected.   
\end{proof}

Now, we prove Theorem~\ref{n=1}.
%\begin{thm}
%The Julia set of $C_{ze^{z}}$ is connected.
%\end{thm}
\begin{proof}[Proof of Theorem~\ref{n=1}]
Note that  $C_1(z)=\frac{z^3(2z+3)}{2(z+1)^3}$ and $C_1'(z)=\frac{z^2(2z^2+8z+9)}{2(z+1)^4}$. Apart from the origin $C_1$ has two free critical points $\frac{-4\pm i\sqrt{2}}{2}$. As $\mathcal{J}(C_1)$ is symmetric about the real axis and $\mathcal{A}_\infty$ contains at least one critical point, these two free critical points are in $\mathcal{A}_\infty$. Therefore the immediate basin of $0$, $\mathcal{A}_0$ contains no critical point other than the origin. Note that $(0,\infty)$ is invariant under $C_1$,  and $C_1'(x)>0$ for every $x>0$. Again, as $C_1(x)-x=-\frac{x(3x^2+6x+2)}{2(x+1)^3}$, we have  $C_1(x)<x$  for all $x>0$. This implies that $\lim\limits_{k\to \infty}C_1^k(x)=0$ for all $x >0$ (Figure~\ref{plot_C_1}). Therefore, we get that $\mathcal{A}_0$ is unbounded. Thus, by Lemma \ref{Connected_J_set} we conclude that $\mathcal{J}(C_1)$ is connected (see Figure~\ref{Fatouset-C_1}).
\end{proof}
%----------------------------
\begin{figure}[h!]
	\begin{subfigure}{0.5475\textwidth}
		\centering
		\includegraphics[width=1.0\linewidth]{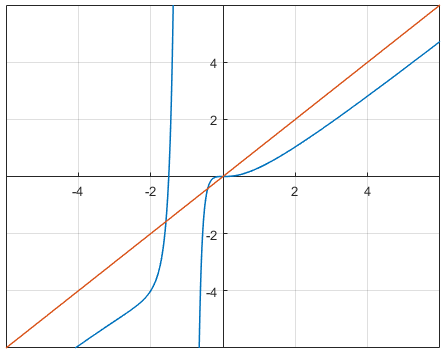}
		\caption{The graph of $C_1$ on the real axis}
		\label{plot_C_1}
	\end{subfigure}
	\begin{subfigure}{.4216\textwidth}
		\centering
		\includegraphics[width=1\linewidth]{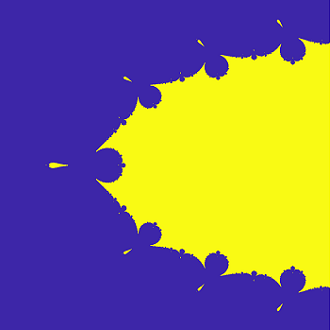}
		\caption{The Fatou and Julia sets of $C_1$}
		\label{Fatouset-C_1}
	\end{subfigure}
	\caption{The Chebyshev's method applied to $ze^z$}
\end{figure}
%--------------------------

Now onwards, we consider $n\geq 2$. Let the positive real solution of the equation $z^n=r$ be $c_r$, i.e., $c_r$ is a critical point of $C_n$ lying on the positive real axis (see Lemma~\ref{criticalpoints}).
Now for real $x$, we can write $C_n'$ as
\begin{equation}\label{crit_C_f}
C_n'(x)=\frac{x^n\left(x^n-c_r^n\right)\left(x^n-c\right)\left(x^n-\bar{c}\right)}{(x^n+\frac{1}{n})^4} =\frac{x^n (x^n -c_r ^{n})((x^n -\Re(c))^2 +\Im(c))^2}{(x^n+\frac{1}{n})^4}.
\end{equation}
On the real line, 
\begin{equation}\label{comp_with_x}
   C_n(x)-x=-\frac{x\left(3 n^2 x^{2 n}+n(n+5) x^n+2\right)}{2\left(n x^n+1\right)^3} \text {. }
\end{equation}
In order to determine the connectivity of the Julia set of $C_n$, we consider the two cases: $n$ is odd and $n$ is even.
First we consider the odd case.
%----------------------------
\begin{figure}[h!]
\begin{subfigure}{.5\textwidth}
	\centering
	\includegraphics[width=1.0\linewidth]{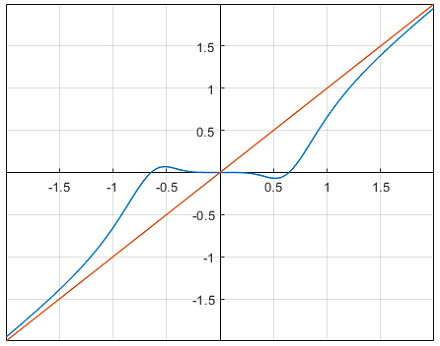}
	\caption{$n=4$}
	\label{plot_C_even}
\end{subfigure}
	\begin{subfigure}{.5\textwidth}
		\centering
		\includegraphics[width=1.0\linewidth]{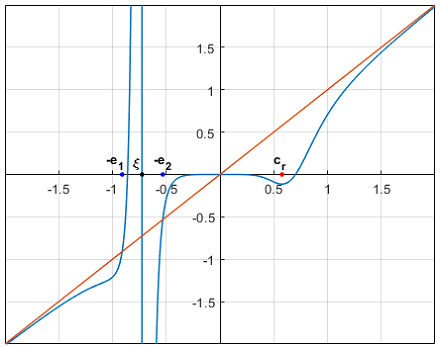}
		\caption{$n=5$}
		\label{plot_C_odd}
	\end{subfigure}
\caption{The graph of $C_n$ on the real axis  }
\end{figure}
%--------------------------
	If $n$ is odd, then it follows from Remark~\ref{real-extra} that $C_n$ has only two real extraneous fixed points and those are negative. Let those be  $-e_1$ and $-e_2$, where $e_1>e_2>0$.
\begin{lem}
If $n$ is odd then   $(-\infty, -e_1) \subset \mathcal{A}_{\infty}$ and    $\left(-e_2, 0\right) \subset \mathcal{A}_{0}$.
\label{odd-real}
\end{lem}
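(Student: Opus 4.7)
The plan is to analyse the real dynamics of $C_n$ directly, via monotonicity on the negative axis. First, I would rewrite Equation~(\ref{comp_with_x}) in factored form as
\[
C_n(x)-x \;=\; -\frac{3n^{2}\,x\,(x^{n}-w_{1})(x^{n}-w_{2})}{2(nx^{n}+1)^{3}},
\]
where $w_{1}<w_{2}<0$ are the two roots of $3n^{2}w^{2}+n(n+5)w+2=0$; since $n$ is odd, the real extraneous fixed points are $-e_{i}=-|w_{i}|^{1/n}$ for $i=1,2$. A direct check from Equations (\ref{ext_f1}) and (\ref{ext_f2}) gives the strict ordering $|w_{2}|<\tfrac{1}{n}<|w_{1}|$, equivalently $w_{1}<-\tfrac{1}{n}<w_{2}<0$, which says that the unique real pole of $C_{n}$ (located at $-p$ with $p^{n}=1/n$) lies strictly between $-e_{1}$ and $-e_{2}$. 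In particular neither $(-\infty,-e_{1})$ nor $(-e_{2},0)$ contains a pole.

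Next I would read off monotonicity from Equation~(\ref{crit_C_f}). For odd $n$ and real $x<0$ off the pole, we have $x^{n}<0<c_{r}^{n}$, so $x^{n}(x^{n}-c_{r}^{n})>0$; the factor $(x^{n}-c)(x^{n}-\bar c)=(x^{n}-\Re c)^{2}+(\Im c)^{2}$ is positive and so is $(x^{n}+\tfrac{1}{n})^{4}$. Hence $C_{n}'>0$ on each of the two intervals $(-\infty,-p)$ and $(-p,0)$.

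With these preparations I would treat the two intervals separately. On $(-e_{2},0)$ the factors $x^{n}-w_{1}$, $x^{n}-w_{2}$, and $nx^{n}+1$ are all positive while $x<0$, so the factored formula gives $C_{n}(x)>x$; monotonicity of $C_{n}$ on $(-p,0)$ together with the boundary values $C_{n}(-e_{2})=-e_{2}$ and $C_{n}(0)=0$ shows that $C_{n}$ maps $(-e_{2},0)$ into itself. Thus the orbit of any $x\in(-e_{2},0)$ is strictly increasing and bounded above by $0$, and its limit must be a fixed point of $C_{n}$ in $[x,0]$, hence equals $0$. Symmetrically, on $(-\infty,-e_{1})$ we have $x^{n}<w_{1}<w_{2}$, so $(x^{n}-w_{1})(x^{n}-w_{2})>0$ and $nx^{n}+1<0$; combined with $x<0$ this forces $C_{n}(x)<x$, while monotonicity on $(-\infty,-p)$ and $C_{n}(-e_{1})=-e_{1}$ give $C_{n}\bigl((-\infty,-e_{1})\bigr)\subset(-\infty,-e_{1})$. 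The orbit is therefore strictly decreasing with no finite fixed point available as a limit, so $C_{n}^{k}(x)\to\infty$.

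Finally, to upgrade these pointwise convergence statements to inclusions in immediate basins, I would note that $(-e_{2},0)$ is a connected subset of the basin of $0$ accumulating at $0$, hence lies in the Fatou component through $0$, namely $\mathcal{A}_{0}$; likewise $(-\infty,-e_{1})$ is a connected subset of the basin of $\infty$ accumulating at $\infty$, so it lies in $\mathcal{A}_{\infty}$. I do not anticipate any serious obstacle: the only delicate step is the sign bookkeeping in the factored form of $C_n(x)-x$, together with the verification that the real pole separates $-e_1$ from $-e_2$, which is what guarantees genuine monotonicity of $C_n$ on both intervals.
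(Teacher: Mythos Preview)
Your proposal is correct and follows essentially the same route as the paper's proof: factor $C_n(x)-x$ using the quadratic in $w=x^n$, locate the real pole strictly between the two extraneous fixed points, use Equation~(\ref{crit_C_f}) to get $C_n'>0$ on the negative axis off the pole, and then run the monotone-orbit argument on each interval. The only addition you make is the closing paragraph explicitly justifying that the connected intervals land in the \emph{immediate} basins rather than merely the full basins; the paper leaves this implicit.
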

\begin{proof}
 The  Equation (\ref{comp_with_x}) can be written as
	$$
	C_n(x)-x=-\frac{3 x\left(x^n+ e_1 ^n \right)\left(x^n+e_2 ^n\right)}{2 n\left(x^n-{\xi}^n\right)^3}, 
	$$
where  $\xi$ (the $n$-th real root of $\frac{-1}{n}$) is the real pole of $C_n$. Since $\frac{-(n+5)- \sqrt{n^2+10n +1}}{6n} < \frac{-1}{n} < \frac{-(n+5) + \sqrt{n^2+10n +1}}{6n} <0$ (as $n \geq 2$), we  have 
	$-e_1 < \xi < -e_2$ (see Figure~ \ref{plot_C_odd}  for $n=5$ ). Hence, 
	\begin{equation}
		C_n(x)\left\{\begin{array}{ccc}
			<x & \text { whenever } & x<-e_1 \\
			>x & \text { whenever } & -e_1<x<\xi \\
			<x & \text { whenever } & \xi<x<-e_2 \\
			>x & \text { whenever } & -e_2<x<0 \\
			<x & \text { whenever } & x>0.
		\end{array}\right.
	\end{equation}
	Recall that  $C_n'(x)= \frac{x^n (x^n -c_r ^{n})((x^n -\Re(c))^2 +\Im(c))^2}{(x^n+\frac{1}{n})^4}.$
	For $x<0, $ we have $x^n (x^n -c_r ^n) >0$ and therefore,
	$$
	C_n'(x)\left\{\begin{array}{ccc}
		>0 & \text { whenever } & -\infty<x<\xi \text { or } \xi<x<0 \\
		<0 & \text { whenever } & 0<x<c_r \\
		>0 & \text { whenever } & x>c_r.
	\end{array}\right.
	$$
Therefore $C_n$ is increasing in $\left(-\infty, \xi \right) \cup\left(\xi, 0\right) \cup\left(c_r, \infty\right)$ and decreasing elsewhere. It is observed that
	$$
	-\infty<-e_1<z_{-}<\xi<-e_2<0<c_r<z_{+}<\infty \text {, }
	$$ where $z_{-}$ and $z_{+}$ are the real roots of $C_n$ (see Lemma~\ref{roots} and Figure~\ref{plot_C_odd}).
	
If  $I_1=(-\infty, -e_1)$ and $I_2=(-e_2, 0)$ then  $C_n$ is increasing in $I_1 \cup I_2$. Also, $C_n\left(I_i\right) \subseteq I_i$ for $i=1,2$. As $C_n(x)<x$ in $I_1$, we have that  $\lim\limits _{k \rightarrow \infty} C_n^k(x)=-\infty$. As $C_n(x)>x$ in $I_2$, we get $\lim\limits _{k \rightarrow \infty} C_n^k(x)=0$.
	Therefore $I_1 \subset  \mathcal{A}_\infty$ and $ I_2 \subset  \mathcal{A}_0$.
\end{proof}
An  immediate consequence of the above lemma is the proof of Theorem~\ref{odd-connected}.
 
\begin{proof}[Proof of Theorem~\ref{odd-connected}]
Let $c_r$ and $z_{+}$ be the real critical point and  the positive root  of $C_n$ respectively. Then  the image of  $ \left(0, z_{+}\right) $ is contained in $ \left(C_n (c_r), 0\right) $. By assumption, $C_n (c_r)> -e_2$, and by Lemma
\ref{odd-real}, $(-e_2, 0) \subset \mathcal{A}_0$. Therefore $\left(0, z_{+}\right) \subset \mathcal{A}_0$.
\par
If for any $x > z_{+} $, $C_n^k(x)>z_{+}$ for all $k $ then $\left\{C_n^k(x)\right\}_{k>0}$ would be a strictly decreasing sequence bounded below by $z_{+}$, and hence  converges to  a point in $\left[z_{+}, \infty\right)$. This point must be a fixed point of $C_n$. However $C_n$ has no fixed point in $[z_{+}, \infty)$. Therefore, for each $x \in\left(z_{+}, \infty\right)$, there exists a natural number $n_x$ such that $0< C_n^{k_x}(x) \leq z_{+}$. It now follows from the previous paragraph that $(z_{+},\infty) \subset \mathcal{A}_0$. Thus $\mathcal{A}_0$ is unbounded. 
Therefore $\mathcal{J}(C_n)$ is connected  by Lemma \ref{Connected_J_set}.
\end{proof}

The attracting basin  of $0$ and the parabolic basin of $\infty$ are shown in blue and yellow respectively  in Figure~\ref{Juliaset-C_5}. The Julia set of $C_5$ is given as the boundary of the two colors.  Figure~\ref{CJ_n5-origin}  shows the view from the origin whereas  Figure~\ref{CJ_n5-infinity} is the view from infinity showing the five attracting petals in yellow.
\begin{rem}
	\begin{enumerate}
		\item 	It follows from the proof of Theorem~\ref{odd-connected} that $(-e_2, \infty) \subset \mathcal{A}_0$.
		\item It is numerically found that $C_n (c_r) > -e_2$ for all $n \leq 15$ .
	\end{enumerate}

\end{rem}
%----------------------------
\begin{figure}[h!]

	\begin{subfigure}{0.5\textwidth}
		\centering
		\includegraphics[width=1.0\linewidth]{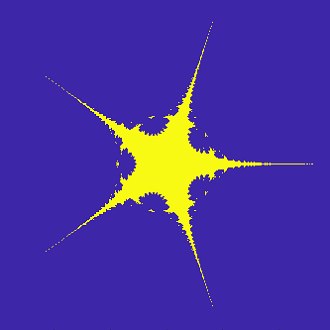}
		\caption{View in $\mathbb{C}$}
	\label{CJ_n5-origin}
	\end{subfigure}
	\begin{subfigure}{.5\textwidth}
		\centering
		\includegraphics[width=1.0\linewidth]{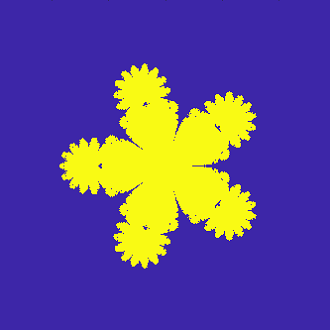}
	\caption{View from $\infty$}
		\label{CJ_n5-infinity}
		\end{subfigure}
\caption{The Fatou and Julia set of $C_n$ for $n=5$}
\label{Juliaset-C_5}
\end{figure}
Now we consider even $n$.
In this case $C_n$ is an odd function and  has no real extraneous fixed point and two real critical points $-c_r$ and $c_r$. It follows from Equation (\ref{comp_with_x}) that  $C_n(x)-x$ is positive if and only if $x$ is negative. In other words,
\begin{equation}
	C_n(x)	\left\{\begin{array}{ll}
	>x & \text { whenever } x<0 \\
		 <x & \text { whenever } x>0 .
	\end{array}\right.
	\label{even-n-Cf}
\end{equation}
Again from Equation (\ref{crit_C_f}) we have
$$
	C_n'(x)\left\{\begin{array}{ccc}
		>0 & \text { whenever } &-\infty<x<-c_r \\
		<0 & \text { whenever } & -c_r<x<0 \\
		<0 & \text { whenever } & 0<x<c_r \\
		>0 & \text { whenever } & x>c_r.
	\end{array}\right.
$$

Thus $C_n$ is increasing in $\left(-\infty,-c_r\right) \cup\left(c_r, \infty\right)$ and is decreasing in $\left(-c_r, c_r\right)$ (see Figure~\ref{plot_C_even} for $n=4$).
\par The proof of Theorem~\ref{even-connected} follows.
 
\begin{proof}[Proof of Theorem~\ref{even-connected}]
Let $I_{ r}=\left[0, c_r\right]$. Then $C_n ^2 (I_r)$ is properly contained in $I_r$ as $C_n$ is an odd function. Since $C_n$  is strictly decreasing on $I_r$, the function  $C_{n} ^2 :I_r \to I_r$ is strictly increasing. Since  $C_n ^{2}(x) < C_n (x)$ for all $x \in I_r$, the sequence $\{C_n ^{2m}(x)\}_{m>0}$ is   strictly decreasing,  which is also bounded below by the origin.  This sequence converges and the limit point is either a fixed point or a two periodic point of $C_n$. However the only such point is the origin by assumption. Therefore, $\lim\limits_{m \to \infty}C_n ^m (x) =0$ for all $x \in I_r$. In other words, $I_r \subset \mathcal{A}_0$.
%	
%	
%	 (because $C_f ^{2}(c_r)< C_f (-c_r)$)the same as the image of $(-z_0, z_0)$ under $C_f$. This image is contained in $I_r$. 
%	
%	
%	 and $I_{c^*}=\left(-c^*, c^*\right)$.
%	Then $I_{c^*} \subset C_f\left(I_r\right)$ as $-c_r<-c^*$ and $C_f$ is an odd function. As $I_{c_r} \subset (-z_0,z_0)$, we have $|C_f(x)|<|x|$ for all $x \in I_{c_r}$.
%	Thus by contraction principle $\lim\limits _{n \rightarrow \infty} C_f^n(x)=0$, for all $x \in I_{c_r}$.
\par 	Recall that $z_0$ denotes the positive root of $C_n$ when $n$ is even.
The images of the intervals $[c_r, z_0]$ and $[0,c_r]$ under $C_n$ is the same and therefore $[0,z_0] \subset \mathcal{A}_0$. 

Now, if for a real number $x \in\left(z_0, \infty \right)$, $C_n^{k}(x) > z_0$ for all $k$ then $\{C_n^{k}(x)\}_{k>0} $ is a strictly decreasing sequence and is   bounded below by $z_0$ and therefore converges to a fixed point of $C_n$. But there is no fixed point in $(z_0, \infty)$.  Therefore, for each $x \in\left(z_0, \infty \right)$,  there exists a natural number $m_x$ such that $C_n^{m_x}(x) < z_0$. Let $m_x$ be the smallest such number.  Since $C_n^{m_x}(x) >0 $, we have $C_n^{m_x}(x) \in I_r$. Thus $[0,\infty] \subset \mathcal{A}_{0}$, proving that $\mathcal{A}_0$ is unbounded.
	
	Therefore, the Julia set $\mathcal{J}(C_n)$ is connected by Theorem \ref{Connected_J_set}.
\end{proof}
The proof of Corollary~\ref{cor-even} follows.
\begin{proof}[Proof of Corollary~\ref{cor-even}] We shall show $C_n (x) +x>0$ for all $x \in (0,1)$  and the proof will follow by applying Theorem~\ref{even-connected} since $0 < c_r <1$.
	Let $y=x^n$ and  $G_n(y)=4 n^3 y^3 +9 n^2 y^2 +(7n -n^2)y +2$. Then $C_n (x)+x =\frac{xG(x^n)}{2(nx^n +1)^3}$ (see Equation~(\ref{formula_cheby})). Since $n$ is even, we have $x^n >0$ whenever $x>0$. Thus, we shall be done by showing  $G_n(y) >0$ for all $y>0$ whenever $n \leq 16$ and is even.
	
\par  For $ n=2, 4, 6$, we have $7n -n^2>0$ and therefore $G(y)>0$ for all $y>0$. 
\par Let $n \geq 8$. Then $G_n'(y)= 12 n^3 y^2 +18n^2 y -n(n-7)$ and $G_n''(y)= 24 n^3 y+ 18 n^2 $. The positive critical point of $G_n$ is $\frac{-9 +\sqrt{12n -3}}{12n}$. Let it be denoted by $c_n$. The critical value is $G_n(c_n)=\frac{1}{24 \sqrt{3}} (3 \sqrt{3} (6n+1)-(4n-1)^{\frac{3}{2}})$ and it is a decreasing function of $n$ for $n \geq 8$. As $G_{16}(c_{16}) \approx 0.095$, we have that $G_n(c_n) >0$ for each  $ n \in \{8, 10, 12, 14\} $.  Since $G_n''(y) >0$ for all $n$ and $y >0$, $c_n$ is the only critical point of $G_n$ and  is the global minimum  of $G_n$ in $(0, \infty)$. Therefore $G_n(y) >0$ for all  even $  n \leq 16$. \end{proof}
Figure \ref{even-origin-view} shows the basin of attraction of $0$ for $C_4$ in yellow, which is symmetric about the origin. The blue region shows the parabolic basin of $\infty$. This is seen in Figure~\ref{even-infinity-view}, where the four  attracting petals at the infinity are shown in blue.
\par 
We conclude with the following remarks.
\begin{rem}
\begin{enumerate}
\item  
Under the hypothesis of Theorem~\ref{even-connected}, the whole real line is contained in the immediate basin $\mathcal{A}_0$ of $0$. This is   because $C_n$ is an odd function. 
\item 	
The hypothesis of Theorem~\ref{even-connected} gives that the real critical point $c_r$ is  in the immediate basin of $0$. Even if this is not the case, the critical point $c_r$ cannot converge to $\infty$. This is because, for each $x \in (z_0, \infty)$ we have $C_n(x) < x$ (see Inequality (\ref{even-n-Cf})). Since the parabolic basin of $\infty$ must contain a critical point, this critical point is one of the (non-real)  solutions of $z^n =c$ or of $z^n =\overline{c}$. However, the symmetry of the Fatou set (see Lemma \ref{Cheby_sym}) gives that if  $ z \in \mathcal{A}_\infty$ then so is $\overline{z}$. Indeed, all the solutions of $z^n =c$ and $z^n =\overline{c}$ are in $\mathcal{A}_{\infty}$. It is already observed that the real critical point is   $\mathcal{A}_0$     all odd $n \leq 15 $. 
\par The immediate basin $\mathcal{A}_0$ is simply connected (Lemma\ref{Connected_J_set}). Now, it follows from the Riemann-Hurwitz formula that the degree of $C_n$ on  $\mathcal{A}_0$ is $2n +1$ for all $n \leq 16$. However, the degree of $C_n$ is $3n+1$. Thus,   $\mathcal{A}_0$ is  not completely invariant. The components of the basin of $0$ different from its immediate basin are shown as isolated yellow regions in Figures~\ref{Fatouset-C_1}, \ref{CJ_n5-origin} and \ref{even-origin-view}.
 
\item Since $G_{18}(c_{18}) \approx -0.766$, the extension of Corollary~\ref{cor-even} is not possible beyond the value $n=18$ by the same argument used in it. This would be investigated later.

\end{enumerate}
	\end{rem}
%----n=2------------------------
\begin{figure}[h!]
	\begin{subfigure}{0.490\textwidth}
		\centering		\includegraphics[width=1.0\linewidth]{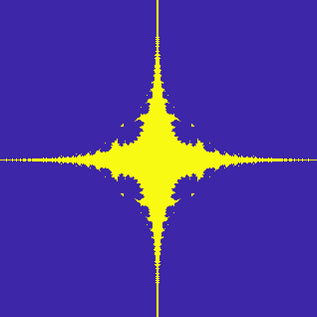}
		\caption{View in $\mathbb{C}$}
		\label{even-origin-view}
	\end{subfigure}
	\begin{subfigure}{.491\textwidth}
		\centering
		\includegraphics[width=1.0\linewidth]{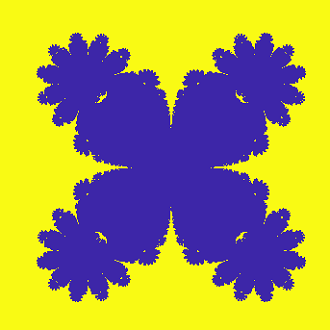}
		\caption{View from $\infty$}
		\label{even-infinity-view}
	\end{subfigure}
	\caption{The Fatou and Julia set of $C_4$}
	\label{Juliaset-C_4}
\end{figure}

\textbf{Acknowledgement:} The authors are very much thankful to Saminathan Ponnusamy and Gang Liu for suggesting a proof of Theorem~\ref{Characterization}. This work was initiated when the third author was associated with the project (Grant No. CRG/2022/003560 (SERB)) at Indian Institute of Science Education and Research Kolkata. The fourth author is supported by a Senior Research Fellowship provided by Council of Scientific and Industrial Research, Govt. of India.

\end{document}